\numberwithin{equation}{section}
\newtheorem{theorem}{Theorem}[section]
\newtheorem{lemma}[theorem]{Lemma}
\newtheorem{corollary}[theorem]{Corollary}
\newtheorem{proposition}[theorem]{Proposition}
\theoremstyle{definition} 
\newtheorem{Remark}[theorem]{Remark}
\newtheorem{OP}[theorem]{Open problem}
\newenvironment{remark}{\begin{Remark}\rm}{\end{Remark}}
\newenvironment{example}{\begin{Example}\rm}{\end{Example}}
\newtheorem{Example}[theorem]{Example}
\newcommand{\eq}{\begin{equation}}
\newcommand{\qe}{\end{equation}}
\newcommand{\beqa}{\begin{eqnarray}}
\newcommand{\eeqa}{\end{eqnarray}}
\newcommand{\beqan}{\begin{eqnarray*}}
\newcommand{\eeqan}{\end{eqnarray*}}
\newcommand{\N}{\mathbb{N}}
\newcommand{\Z}{\mathbb{Z}}
\newcommand{\R}{\mathbb{R}}
\newcommand{\C}{\mathbb{C}}
\newcommand{\E}{\mathbb{E}}
\renewcommand{\P}{\mathbb{P}}
\newcommand{\cO}{\mathcal{O}}
\newcommand{\Tr}{{\rm Tr}}
\newcommand{\Supp}{{\rm Supp}}
\newcommand{\Span}{{\rm Span}}
\newcommand{\bs}{\boldsymbol}
\newcommand{\bv}{\mathbf}
\newcommand{\Var}{\mathbb{V}{\mathrm{ar}}}
\renewcommand{\phi}{\varphi}
\renewcommand{\epsilon}{\varepsilon}
\renewcommand{\d}{ {\rm d}}
\renewcommand{\emptyset}{\varnothing}
\renewcommand{\Re}{\mathfrak{Re}}
\DeclareUrlCommand\email{\urlstyle{rm}}
\DeclareUrlCommand\email{\urlstyle{rm}}
\title{Polynomial Ensembles and Recurrence Coefficients} 
\author{\ Adrien Hardy \footnote{Laboratoire Paul Painlev\'e, Universit\'e de Lille,
  Cit\'e Scientifique, 59655 Villeneuve d'Ascq Cedex, France. Email: \href{mailto:adrien.hardy@math.univ-lille1.fr}{\nolinkurl{adrien.hardy@math.univ-lille1.fr}}}
}
\begin{document}
\maketitle 

\begin{abstract}
Polynomial ensembles are determinantal point processes associated with (non necessarily orthogonal) projections onto polynomial subspaces. The aim of this survey article is to put forward the use of recurrence coefficients to obtain the global asymptotic behavior of such ensembles in a rather simple way.  We provide a unified approach to recover well-known convergence results for real OP ensembles. We study the mutual convergence of the polynomial ensemble and the zeros of its average characteristic polynomial; we  discuss in particular the complex setting.  We also control the variance of linear statistics of polynomial ensembles and derive comparison results, as well as  asymptotic formulas for real OP ensembles. Finally, we reinterpret the classical algorithm to sample determinantal point processes so as to cover the setting of non-orthogonal projection kernels.  A few open problems are also suggested. 
\end{abstract}

\section{Introduction}

Determinantal point processes (DPPs) are random point configurations where the points tend to repel each others.  A DPP is parametrized by a kernel $K(x,y)$ and a reference measure $\mu$ on the space $\Lambda$ where the points, or particles, live. The most repulsive DPPs arise when the integral operator $K$ acting on $L^2(\mu)$ associated with the kernel is a projection operator, in which case the number of points in the configuration equals to the rank of the projection; we refer to  \citep{Sos00b, Lyo03, Joh06, HoKrPeVi06} for general presentations. We here focus on polynomial ensembles, which are DPPs  coming  with a projection $K_N$ onto polynomials of degree less than $N$. Such models are special instances of biorthogonal ensembles \citep{Bor99}. When $K_N$ is an orthogonal projection, this yields an important class of DPPs popularized by random matrix theory among other things: the orthogonal polynomial (OP) ensembles \citep{Kon05}. Non-orthogonal projections also appear in several interesting models. For instance multiple OP ensembles, which involve polynomials satisfying multiple orthogonality conditions with respect to several scalar products, appear in several non-unitary invariant random matrix models \citep{Kui10}. Polynomial ensembles turn out to be a class of models which is stable under natural matrix operations \citep{Kui16}. 

 \subsection{Polynomial ensembles} 
Let $\mu$ be a Borel measure with infinite support $\Lambda\subset\C$ such that any polynomial belongs to $L^2(\mu)$. Consider two families $P_{k}$ and $Q_{k}$ of function in $L^{2}(\mu)$ such that $P_{k}$ is a polynomial of degree $k$ and the $Q_{k}$'s satisfy the biorthogonality relations
\eq
\label{biortho}
\langle P_k,Q_m\rangle :=\int P_{k} (x)\overline{Q_{m}}(x) \mu(\d x)=\delta_{k,m},\qquad k,m\in \N.
\qe
A \emph{polynomial ensemble} (of $N$ points) is a probability distribution on $\Lambda^N$ of the form
\eq
\label{DPPdistr}
\d \P(x_1,\ldots,x_N)=\frac{1}{N!}\det\Big[K(x_i,x_j)\Big]_{i,j=1}^N\prod_{i=1}^N\mu(\d x_i),
\qe
with kernel 
\eq
\label{kernel}
K(x,y)=\sum_{k=0}^{N-1}P_{k}(x)\overline{Q_{k}}(y).
\qe
When the reference measure is supported on $\R$, we say that $\P$ is a \emph{real polynomial ensemble}.

It is known that a polynomial ensemble $\P$ induces a DPP with kernel $K(x,y)$, namely for every $k\geq 1$ and every positive Borel function $f:\Lambda^{k}\to\C$, we have
\eq
\label{def k cor}
\mathbb E\left[\sum_{{i_1}\neq \cdots\neq {i_k}}f(  x_{i_1}, \ldots, x_{i_k})\right]=\int_{\Lambda^k}f(x_1,\ldots,x_k)\det\Big[K(x_i,x_j)\Big]_{i,j=1}^k\prod_{i=1}^k\mu(\d x_i),
\qe
where the sum ranges over all pairwise distinct $k$-indices from $1$ to $N$ and $\E$ is the expectation with respect to $\P$. Note that assuming that \eqref{DPPdistr} defines a probability measure and \eqref{def k cor} yield together that $K(x,y)$ has to be positive definite, namely  $\det[K(x_i,x_j)]_{i,j=1}^k\geq 0$ for every $k\geq 1$ and every $x_1,\ldots,x_k\in\Lambda$.

\paragraph{Projections and OP ensembles.} Note that the integral operator $K$ acting on $L^2(\mu)$ by
\eq
\label{op}
K:f(x)\mapsto\int K(x,y)\overline{f(y)}\mu(\d y)
\qe
is a bounded projection  onto $\mathrm{Im}(K)=\mathrm{Vect}(P_{0},\ldots,P_{{N-1}})$, the polynomials of degree at most $N-1$, parallel to $\mathrm{Ker}(K)^\perp=\mathrm{Vect}(Q_{0},\ldots,Q_{{N-1}})$. This projection is orthogonal if and only if $K^*=K$, which is equivalent to $P_k=Q_k$ for every $k\geq 0$. In this case, $P_k$ is the orthonormal polynomial associated with the reference measure $\mu$, and we say $\P$ is an \emph{orthogonal polynomial (OP) ensemble}.


\paragraph{Asymptotic.} From now, we consider a sequence of polynomial ensembles: For any $N\geq 1$ let  $\P_{N}$ be a polynomial ensemble with reference measure $\mu_{N}$ and kernel $K_{N}(x,y)$  associated to functions $P_{k}^{N}$ and $Q_{k}^{N}$ satisfying the same conditions as above. When the reference measure does not depend on $N$, we just write $P_{k}$ and $Q_{k}$. The goal is to study the behavior of the polynomial ensemble $\P_N$ in the large $N$ limit. The main tool we use here to do so are the recurrence coefficients.

\subsection{Recurrence coefficients}
\label{reccoef_sec}
Since $xP_{k}^N$ is a polynomial of degree $k+1$, the biorthogonality relations \eqref{biortho} yield, 
\eq
\label{rec def}
xP_{k}^N(x)=\sum_{m=0}^{k+1}\langle xP_{k}^N,Q_m^N\rangle P_{m}^N(x),
\qe
where $\langle \,\cdot,\cdot\,\rangle:=\langle \,\cdot,\cdot\,\rangle_{L^2(\mu_N)}.$ We refer to the coefficients $\langle xP_{k}^N,Q_m^N\rangle$ as \emph{recurrence coefficients}, since one can deduce inductively any $P_{k}^N$ from $P_0^N$ and these coefficients. 

\paragraph{Three-term recurrence relation.} 
In the setting of real OP ensembles,  where $P_k^N=Q_k^N$ are orthonormal polynomials with respect to  a measure $\mu_N$ on $\R$, we set as usual $$a_{k}^N:=\langle xP_{k}^N,P_{k+1}^N\rangle,\qquad b_k^N:=\langle xP_{k}^N,P_k^N\rangle,$$ with the convention that $a_{-1}^N:=0$, so as to recover  the three-term recurrence relation
\eq
\label{3 term rec}
xP_{k}^N(x)=a_{k}^N P_{k+1}^N(x)+b_{k}^N P_{k}^N(x)+a_{k-1}^N P_{k-1}^N(x).
\qe
When the reference measure does not depend on $N$, we just write $a_k$ and $b_k$.

\paragraph{The key formula.} The main message is that knowing the large $N$ limit of recurrence coefficients $\langle xP_{k}^N,Q_m^N\rangle$ as $N\to\infty$ provides a lot of information on the asymptotic behavior of polynomial ensembles.  Moreover, many of these asymptotic results turn out to a have quite simple combinatorial proofs, which may contrast with the usually quite involved analytical proofs in the field.

We will repeatedly use the following key observation: If one wants to express $x^\ell P_{k}^N$ in terms of the recurrence coefficients, it is convenient to consider the oriented graph $ G= (V,E)$ with vertices  $ V:=\N^2$ and  edges
\[
 E :=\Big\{(n,k)\rightarrow (n+1,m), \qquad  n,k\in\N,\quad 0\leq m \leq k+ 1\Big\},
\] 
where to each edge  is associated the weight 
\[
w\Big((n,k)\rightarrow (n+1,m)\Big):= \langle xP_{k}^N , Q_{m}^N \rangle.
\]
Indeed, the recurrence equation \eqref{rec def} yields by induction
$$
x^\ell P_{k}^N(x)=\sum_{m=0}^{k+\ell}\left(\sum_{\gamma : (0,k)\rightarrow (\ell,m)}\prod_{e\in\gamma}w(e)\right)P_{m}^N(x),
$$
where the rightmost sum ranges over the oriented paths $\gamma$ on $G$ starting from $(0,k)$ and ending at $(\ell,m)$, and each path picks the product of the weights along the edges it crosses. This leads to the  formula
\eq
\label{key}
\langle x^\ell P_{k}^N , Q_{m}^N\rangle=\sum_{\gamma : (0,k)\rightarrow (\ell,m)}\prod_{e\in\gamma}w(e),\qquad \ell,k,m\in\N,
\qe
which will be the key to study the large $N$ limit of the moments of polynomial ensembles. 

For example, in the case of a real OP ensemble, the paths in the rightmost sum \eqref{key} start from $(0,k)$, end after $\ell$ steps on the graph at $(\ell,m)$, and at each step the path increases its abscissa by one and  its ordinate by $1$, $0$, or $-1$, with corresponding weights given by 
\begin{align*}
w\Big((n,m)\rightarrow (n+1,m+1)\Big)& =a_{m+1}^N,\\
w\Big((n,m)\rightarrow (n+1,m)\Big)& =b_{m}^N,\\
w\Big((n,m)\rightarrow (n+1,m-1)\Big)& =a_{m}^N.
\end{align*}

\subsection{Organisation} In Section \ref{sec:mean} we investigate the convergence of the mean empirical distribution of polynomial ensembles. After recalling a few definitions, we provide in Theorem \ref{meanconv1} explicit limits for real OP ensembles having recurrence coefficients with continuous directional limits. This recovers and extends results obtained by \cite{Led04,Led05}. We partially extends this result to polynomial ensembles satisfying a finite-term recurrence relation in Proposition~\ref{finiterecTh}. 

Following \citep{Har15}, we study in Section \ref{sec:polchar} the mutual convergence of the mean distribution of polynomial ensembles and the zeros of associated average characteristic polynomials. We  discuss  further the complex setting and show in Corollary \ref{balayage} the two limiting measures have the same logarithmic energy away from the supports. 

In Section \ref{sec:var},  we provide upper bounds, comparison results, and limiting formulas for the variance of linear statistics. For real OP ensembles, after recalling the standard upper bound and explaining how this upgrades the mean convergence of the empirical measure to the almost sure one, we show how to obtain explicit formulas for the limiting variance in a rather broad setting, a proof inspired from \citep{BaHa16}. We also provide upper bounds and comparison estimates for moments of polynomial ensembles by using the approach of \citep{Har15}.

In Section \ref{sec:fluct}, we briefly present the recent results of \cite{BrDu13} and \cite{Lam15b} on fluctuations for the linear statistics of polynomial ensembles satisfying a finite-term recurrence relation.  

In Section \ref{sec:sim}, we review the standard algorithm for sampling DPPs associated with orthogonal projections due to \cite{HoKrPeVi06}.  Since this algorithm does not cover polynomial ensembles, except for OP ensembles, we suggest an alternative approach for this algorithm so as to cover more general DPPs associated with non-orthogonal projections, and in particular arbitrary polynomial ensembles.

\subsection*{Acknowledgements} 
This work has been written for the special issue of Constructive Approximation on the theme ``Approximation and statistical physics'' related to the workshop ``Optimal and random point configurations'' which took place at the Institut Henri Poincar\'e in June 2016. I would like to thank the organizers for giving me the opportunity to present the work \citep{BaHa16} there. I also would like to thank R\'emi Bardenet and Thomas Bloom for enriching discussions related to the present article. I acknowledge the support from CNRS through PEPS JCJC \textsc{DppMc} and from ANR through the grant ANR JCJC \textsc{BoB} (ANR-16-CE23-0003) and Labex \textsc{CEMPI} (ANR-11-LABX-0007-01).

\section{Mean global convergence}
\label{sec:mean}
We consider a sequence of polynomial ensembles $\P_N$ of $N$ particles $x_1,\ldots,x_N$ and investigate the limiting behavior of the empirical distribution
\eq
\label{emp}
\hat\mu_N:=\frac1N\sum_{i=1}^N\delta_{x_i}
\qe
under the law $\P_N$ as $N\to\infty$.  We start with a few definitions and standard facts on convergence of (random) probability measures. 

\subsection{Convergence of measures} Let $\mathcal P(\R^d)$ be the space of Borel probability measures on $\R^d$. A sequence $(\mu_n)$ in $\mathcal P(\R^d)$ \emph{converges weakly} to  a limiting measure $\mu$  when
\eq
\label{weak}
\int f \,\d\mu_n\xrightarrow[n\to\infty]{} \int f \,\d\mu
\qe
for every continuous and bounded functions $f$ on $\R^d$. By approximation this is equivalent to \eqref{weak} for every Lipschitz and bounded functions. When \eqref{weak} holds true for any polynomial function $f$, we say $\mu_n$ \emph{converges to $\mu$ in moments}. If $\mu$ is characterized by its moments, which is for instance the case when $\mu$ has compact support, then convergence in moments implies weak convergence but it is a stronger mode of convergence.

If $\mu_n$ is itself a random probability measure, namely a measurable map from a probability space $(\Omega_n,\mathscr F_n,\P_n)$ to $\mathcal P(\R^d)$, the mean of $\mu_n$ is the probability measure $\E\mu_n$ defined by $\int f\,\d\E\mu_n:=\E\int f\,\d\mu_n.$ Moreover, we say that $\mu_n$ converge in moments, or weakly, to $\mu$ \emph{almost surely} if for every probability space $(\Omega,\mathscr F,\P)$ such that $\P|_{\mathscr F_n}=\P_n$ we have $\P(\mu_n\to\mu \mbox{ in moments})=1$, or 
$\P(\mu_n\to\mu \mbox{ weakly})=1.$ \\

It follows from \eqref{def k cor} with $k=1$ that, with $\hat\mu_N$ defined in \eqref{emp},
\eq
\label{hatdef}
\E\hat\mu_N=\frac1N K_N(x,x)\mu_N(\d x)=\frac1N \sum_{k=0}^{N-1}P_k^N(x)\overline{Q_k^N}(x)\mu_N(\d x).
\qe
We now study the asymptotics of this mean measure. Results on the almost sure convergence of $\hat\mu_N$ will appear in Section \ref{sec:var}. 

\subsection{Real OP ensembles}
We  first show how the key formula  \eqref{key} easily yields an explicit description for the limit of the mean distribution $\E\hat\mu_N$ for the class of real OP ensembles having recurrence coefficients with continuous directional limits. 

The \emph{equilibrium measure of the interval} $[\alpha,\beta]$ is defined by
\eq
\label{arcsine}
\omega_{[\alpha,\beta]}(\d x):=
\displaystyle\frac1\pi\frac{\bv 1_{[\alpha,\beta]}(x) \d x}{\sqrt{(\beta-x)(x-\alpha)}}.
\qe
It is the unique minimizer of $\mu\mapsto \iint \log|x-y|^{-1}\mu(\d x)\mu(\d y)$ over Borel probability measures with support in $[\alpha,\beta]$. A random variable with distribution $\omega_{[\alpha,\beta]}$ is called an \emph{arcsine random variable on} $[\alpha,\beta]$. Our first result is the following.

\begin{theorem} 
\label{meanconv1} Let $\P_N$ be a sequence of real OP ensembles. Assume the recurrence coefficients satisfy  $a_k^N\to a(s)$ and $b_{k}^N\to b(s)$ as $k/N\to s\in(0,1)$, where $a,b:(0,1)\to\R$ are two continuous functions such that $a^kb^m$ are Riemann integrable on $[0,1]$ for every $k,m\geq 0$. Then we have the convergence in moments
$$
\E\hat\mu_N\xrightarrow[N\to\infty]{} \mu_{a,b}
$$
with $\mu_{a,b}$  the law of the random variable $2a(U)\xi+b(U)$, where $U,\xi$ are independent random variables with $U$ uniform on $[0,1]$ and $\xi$ is an arcsine random variable on $[-1,1]$. 
\end{theorem}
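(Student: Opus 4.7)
The plan is to match the moments of $\E\hat\mu_N$ with those of $\mu_{a,b}$. By \eqref{hatdef} and the orthogonality $P_k^N=Q_k^N$ in the real OP setting, for every integer $\ell\geq 0$,
$$
\int x^\ell\,\d\E\hat\mu_N \;=\; \frac{1}{N}\sum_{k=0}^{N-1}\langle x^\ell P_k^N,\, P_k^N\rangle .
$$
Each inner product is given by the key formula \eqref{key} as a sum over oriented paths $\gamma:(0,k)\to(\ell,k)$ on $G$. Under the three-term recurrence \eqref{3 term rec}, these are closed Motzkin-type paths of length $\ell$ with steps $+1,0,-1$ fluctuating around the starting height $k$; since $k\sim sN\gg\ell$ for $N$ large, no positivity constraint is active.

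A path with $j$ up-steps, $j$ down-steps and $\ell-2j$ flat steps carries a product of weights of the form $\prod(a_{k_i}^N)\prod(b_{k_j}^N)$ whose indices $k_i,k_j$ all differ from $k$ by at most $\ell$. Since $k/N\to s$ forces $(k\pm i)/N\to s$ for fixed $i$, the continuity of $a$ and $b$ makes every such product tend to $a(s)^{2j}b(s)^{\ell-2j}$. A standard multinomial count gives $\tfrac{\ell!}{j!j!(\ell-2j)!}=\binom{\ell}{2j}\binom{2j}{j}$ such paths, so that
$$
\langle x^\ell P_k^N, P_k^N\rangle \;=\; \sum_{j=0}^{\lfloor \ell/2\rfloor}\binom{\ell}{2j}\binom{2j}{j}(a_k^N)^{2j}(b_k^N)^{\ell-2j}\,\bigl(1+o(1)\bigr)
$$
with the $o(1)$ uniform in $k$ once $k/N$ lies in a compact subset of $(0,1)$. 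Averaging over $k$ and recognising $\tfrac{1}{N}\sum_k$ as a Riemann sum, the Riemann integrability hypothesis on $a^{2j}b^{\ell-2j}$ yields
$$
\int x^\ell \,\d\E\hat\mu_N \;\xrightarrow[N\to\infty]{}\; \sum_{j=0}^{\lfloor \ell/2\rfloor}\binom{\ell}{2j}\binom{2j}{j}\int_0^1 a(s)^{2j}b(s)^{\ell-2j}\,\d s.
$$

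It remains to identify this with the $\ell$-th moment of $\mu_{a,b}$. Using that the arcsine law on $[-1,1]$ has even moments $\E[\xi^{2j}]=\binom{2j}{j}/4^j$ and vanishing odd moments, a binomial expansion gives $\E\bigl[(2a(s)\xi+b(s))^\ell\bigr]=\sum_j\binom{\ell}{2j}\binom{2j}{j}a(s)^{2j}b(s)^{\ell-2j}$, and integrating over $s\in[0,1]$ via Fubini reproduces exactly the limit above. The most delicate point is the uniform-in-$k$ control that lets one replace the recurrence coefficients at heights $k+O(\ell)$ by the single value at height $k$ before summing: interior values are handled by continuity of $a,b$, while the vanishing of the boundary contribution as $k/N\to 0^+$ or $1^-$ is absorbed precisely by the assumed Riemann integrability of the products $a^{k}b^{m}$ on $[0,1]$.
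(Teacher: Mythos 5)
Your proposal is correct and follows essentially the same route as the paper: reduce the $\ell$-th moment of $\E\hat\mu_N$ to $\frac{1}{N}\sum_k\langle x^\ell P_k^N,P_k^N\rangle$, expand each bracket via the path formula as a Motzkin-path sum with count $\binom{\ell}{2j}\binom{2j}{j}$, replace the nearby recurrence coefficients by $a(k/N),b(k/N)$ using continuity, pass to a Riemann integral, and match against $\E[(2a(U)\xi+b(U))^\ell]$ using the arcsine moments. One cosmetic caveat: writing the replacement as a multiplicative $(1+o(1))$ factor is awkward when $b(s)=0$ (as for the GUE); the clean statement, which is what the paper uses and what your argument actually delivers, is the additive error $\langle x^\ell P_k^N,P_k^N\rangle=\sum_j\binom{\ell}{2j}\binom{2j}{j}a(k/N)^{2j}b(k/N)^{\ell-2j}+o(1)$.
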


\begin{remark} If $V$ is uniform on $[0,1]$, then $\cos(\pi V)$ is an arcsine random variable on $[-1,1]$.  The moments of $\omega_{[-1,1]}$ have an explicit formula (make the change of variable $x=\cos\theta$ and do the Wallis' integrals recursion trick): 
\eq
\label{arcsine moments}
\int x^\ell\, \omega_{[-1,1]}(\d x)= 
\begin{cases}
\displaystyle\frac1{4^{m}}\binom{2m}{m} & \mbox{if } \ell=2m,\\
0 & \mbox{otherwise}.
\end{cases}
\qe
\end{remark}

\begin{corollary} If $\P_N$ be a sequence of real OP ensembles with a reference measure $\mu_N=\mu$ which does not depend on $N$ and $a_k\to a$ and $b_k\to b$ as $k\to\infty$, then 
$$
\E\hat\mu_N\xrightarrow[N\to\infty]{} \omega_{[-2a+b,2a+b]}
$$
in moments and weakly. 
\end{corollary}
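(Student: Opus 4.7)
The plan is simply to specialize Theorem~\ref{meanconv1} to the case of constant directional limit functions.

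First I would verify the hypotheses of Theorem~\ref{meanconv1}. Taking the constant functions $a(s):=a$ and $b(s):=b$ on $(0,1)$, I need to check that for every sequence $k_N$ with $k_N/N\to s\in(0,1)$ we have $a_{k_N}^N\to a$ and $b_{k_N}^N\to b$. Since the reference measure does not depend on $N$, $a_k^N=a_k$ and $b_k^N=b_k$; the fact that $s>0$ forces $k_N\to\infty$, so the assumed limits $a_k\to a$ and $b_k\to b$ give exactly what is needed. The limits $a,b$ are constant, hence continuous, and the functions $a^kb^m$ are constant, hence trivially Riemann integrable on $[0,1]$.

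Next I would identify the limit law $\mu_{a,b}$ supplied by Theorem~\ref{meanconv1}. When $a(s)$ and $b(s)$ are constants, the random variable $2a(U)\xi+b(U)=2a\xi+b$ no longer depends on $U$, so $\mu_{a,b}$ is simply the law of $2a\xi+b$ where $\xi$ is arcsine on $[-1,1]$. A direct change of variables in \eqref{arcsine} shows that the push-forward of $\omega_{[-1,1]}$ under the affine map $x\mapsto 2ax+b$ (for $a\neq 0$) is precisely $\omega_{[-2|a|+b,\,2|a|+b]}$, which equals $\omega_{[-2a+b,\,2a+b]}$ regardless of the sign of $a$ (the endpoints are unordered only up to swapping). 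Theorem~\ref{meanconv1} therefore delivers the convergence in moments
\[
\E\hat\mu_N\xrightarrow[N\to\infty]{}\omega_{[-2a+b,\,2a+b]}.
\]

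Finally, to upgrade to weak convergence it suffices to note that $\omega_{[-2a+b,\,2a+b]}$ has compact support and is therefore determined by its moments; by the standard fact recalled in the ``Convergence of measures'' subsection, convergence in moments to a compactly supported measure implies weak convergence. The only mildly subtle point is the scaling identification in the second step, but it is a one-line change of variables; no real obstacle is anticipated.
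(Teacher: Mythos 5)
Your proof is correct and follows the route the paper clearly intends: specialize Theorem~\ref{meanconv1} to constant functions $a(s)\equiv a$, $b(s)\equiv b$, identify the law of $2a\xi+b$ as the arcsine distribution on $[-2a+b,2a+b]$ by an affine change of variables, and upgrade moment convergence to weak convergence via compact support. (Small remark: since these are orthonormal polynomials on $\R$, the $a_k$ are automatically positive, so the $|a|$ concern in your second step never arises.)
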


\begin{remark} Denisov--Rakhmonov's Theorem, see e.g. \cite[Theorem 1.4.2]{Sim11}, states that if $\mu=\omega(x)\d x+\mu_s$ with $\mu_s$ singular and $\{x\in\R:\, \omega(x)>0\}=[-2a+b,2a+b]$ up to a set of null Lebesgue measure, then  $a_k\to a$ and $b_k\to b$ as $k\to\infty$. \end{remark}

\begin{proof}[Proof of Theorem \ref{meanconv1}] It follows from \eqref{hatdef} that, 
$$
\E\left[\,\int x^\ell\,\d\hat\mu_N\right]=\frac1{N}\sum_{k=0}^{N-1}\langle x^\ell P_{k}^N , P_{k}^N\rangle.
$$
In the case of real OPs, the paths in the key formula \eqref{key} for $\langle x^\ell P_{k}^N , P_{k}^N\rangle$ explore vertices contained in the set
$$
\Big\{ (n,m)\in\N^2 : \quad 0 \leq n\leq \ell,\quad k-\ell \leq m < k+\ell \Big\}.
$$
This yields  in particular that $\langle x^\ell P_{k}^N , P_{k}^N\rangle$ is a polynomial map in the variables $(a^N_{N+m})_{|m|\leq \ell}$ and $(b^N_{N+m})_{|m|\leq \ell}$ which does not depend on $k$ nor $N$. Namely, there exists a polynomial $\mathscr P_\ell$ such that
$$
\langle x^\ell P_{k}^N , P_{k}^N\rangle=\mathscr P_\ell(a_{k-\ell}^N,\ldots,a_{k+\ell}^N, b_{k-\ell}^N,\ldots,b_{k+\ell}^N)
$$
Moreover, given any $a,b\in\R$, we have
$$
\mathscr P_\ell(a,\ldots,a,b,\ldots,b)=\sum_{m=0}^{\lfloor \ell/2\rfloor} \binom{\ell}{2m}\binom{2m}{m} a^{2m}b^{\ell-2m},
$$
the first binomial term picking the  $2m$ steps ``up or down'' and the second the $m$ steps ``up'' within the previous non-flat steps. 
Using that $a_{k}^N\to a(s)$ and $b_{k}^N\to b(s)$ as $k/N\to s\in (0,1)$ and the assumptions on $a,b$, it follows that,
\begin{align*}
\frac1{N}\sum_{k=0}^{N-1}\langle x^\ell P_{k,N} , P_{k,N}\rangle & = \frac1{N}\sum_{k=0}^{N-1}\mathscr P_\ell(a_{k-\ell}^N,\ldots,a_{k+\ell}^N, b_{k-\ell}^N,\ldots,b_{k+\ell}^N)\\
&= \frac1{N}\sum_{k=0}^{N-1}\mathscr P_\ell(a(\tfrac kN),\ldots,a(\tfrac kN), b(\tfrac kN),\ldots,b(\tfrac kN)) +o(1)\\
& = \sum_{m=0}^{\lfloor \ell/2\rfloor} \binom{\ell}{2m}\binom{2m}{m} \frac1{N}\sum_{k=0}^{N-1}a(\tfrac kN)^{2m}b(\tfrac kN)^{\ell-2m}+o(1)\\
& =  \sum_{m=0}^{\lfloor \ell/2\rfloor} \binom{\ell}{2m}\binom{2m}{m} \int_0^1a(s)^{2m}b(s)^{\ell-2m}\d s +o(1),
\end{align*}
as $N\to\infty$. On the other hand, using the expression \eqref{arcsine moments} for $\E[\xi^k]$, we have 
\begin{align*} 
\E\Big[\big(2a(U)\xi+b(U)\big)^\ell\Big] & =\sum_{m=0}^{\ell}\binom{\ell}{m}\E\big[(2a(U)\xi)^m b(U)^{\ell-m}\big]\\
&=\sum_{m=0}^{\ell}\binom{\ell}{m}\int_0^1 \big(2a(s)\big)^m\E[\xi^m]b(s)^{\ell-m}\d s\\
& = \sum_{m=0}^{\lfloor \ell/2\rfloor}\binom{\ell}{2m}\binom{2m}{m}\int_0^1 a(s)^{2m}b(s)^{\ell-2m}\d s,
\end{align*}
and the result follows.
\end{proof}

\begin{example} 
\label{GUE}  The eigenvalues of a $N\times N$ GUE random matrix is an OP ensemble with $\mu_N=\exp(-Nx^2/2)\d x$ on $\R$. The recurrence coefficients are given by $a_{k}^N=\sqrt{\tfrac kN}$ and $b_{k}^N=0$. The theorem thus applies with $a(s)=\sqrt{s}$ and $b(s)=0$ and, since
$$
\E\Big[\big(2\sqrt{U}\xi\big)^\ell\Big]=2^{\ell}\E[\xi^\ell]\int_0^1 (\sqrt{u})^{\ell}\d u=\bv 1_{\ell=2m}\frac{1}{m+1}\binom{2m}{m}=\frac1{2\pi}\int_{-2}^2 x^\ell \sqrt{4-x^2}\ \d x,
$$
we recover the convergence towards the semi-circle law (weakly and in moments). 
\end{example} 

The convergence towards the law of $2\sqrt{U}\xi$ for the GUE has been obtained by \cite{Led04}, thanks to a Markov operator approach. It relies on the differential equations satisfied by the OPs  and applies to several classical OPs (Hermite, Laguerre, Jacobi). \cite{Led05} extends this approach so as to cover discrete OPs (Charlier, Meixner, Krawtchouk, Hahn).  It turns out that the recurrence coefficients of these classical continuous and discrete OPs are explicit and satisfy the conditions of Theorem \ref{meanconv1} after appropriate scalings, and Theorem \ref{meanconv1} recovers Ledoux's results.

\subsection{Polynomial ensembles}

In the general setting of polynomial ensembles, the three-term recurrence relations does not hold anymore, but in some examples finite-term recurrence relations do. By following the exact same lines of argument as in the proof of Theorem \ref{meanconv1} one can show this result:

\begin{proposition} 
\label{finiterecTh}
Let $\P_N$ be a sequence of polynomial ensembles and assume that:
\begin{itemize}
\item[{\rm(a)}] There exists $q\in\N$ independent on $k,N$ such that $\langle xP_{k}^N,Q_{k-m}^N\rangle=0$ for every $m>q$.
\item[{\rm(b)}] For every $-1\leq j\leq q$, there exists a continuous function $a_j:(0,1)\to\R$ such that 
$$\langle xP_{k}^N,Q_{k-j}^N\rangle\longrightarrow a_j(s),\qquad k/N\to s\in(0,1).$$
\item[{\rm(c)}] For every $k_{-1},\ldots,k_q\in\N$, $\prod_j a_j^{k_j}$ is Riemann integrable on $[0,1]$. 
\end{itemize}
Then, for any $\ell\geq 1$, 
\eq
\label{birotho}
\lim_{N\to\infty}\E\left[\,\int x^\ell\, \d\hat\mu_N\right]=\sum_{\bs k\in \mathscr D_\ell^{(q)}}\left(\begin{matrix}\ell \\ k_{-1},k_0,\ldots,k_q\end{matrix}\right)\int_0^1\prod_{j=-1}^q a_j^{k_j}(s)\d s,
\qe
where
$$
\mathscr D^{(q)}_\ell=\left\{  
\bs k=(k_{-1},k_0,k_1,\ldots,k_q)\in\N^{q+2}:\;\sum_{j=-1}^q k_j=\ell, \;\sum_{j=-1}^q  j k_j=0
\right\}.
$$
\end{proposition}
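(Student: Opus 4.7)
The plan is to carry out the argument of Theorem \ref{meanconv1} in the finite-term recurrence setting, the combinatorics of the key formula \eqref{key} doing essentially all the work. From \eqref{hatdef} and the biorthogonality \eqref{biortho},
$$
\E\!\left[\int x^\ell\,\d\hat\mu_N\right]=\frac1N\sum_{k=0}^{N-1}\langle x^\ell P_k^N,Q_k^N\rangle,
$$
so everything reduces to controlling the diagonal moment $\langle x^\ell P_k^N,Q_k^N\rangle$ for $k/N\to s\in(0,1)$. By \eqref{key}, it is a weighted sum over oriented paths of length $\ell$ from $(0,k)$ to $(\ell,k)$ on the graph $G$, each weighted by the product of the edge weights along it.

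Under assumption (a), the only edges $(n,m)\to(n+1,m-j)$ carrying a nonzero weight are those with $j\in\{-1,0,1,\dots,q\}$, with weight $\langle xP_m^N,Q_{m-j}^N\rangle$. A path from $(0,k)$ back to $(\ell,k)$ is thus encoded by an ordered sequence $(j_1,\dots,j_\ell)\in\{-1,0,\dots,q\}^\ell$ satisfying $\sum_i j_i=0$. Grouping paths by the multiplicity vector $\bs k=(k_{-1},\dots,k_q)$ that records how many steps are of each type, the two constraints $\sum_j k_j=\ell$ and $\sum_j j k_j=0$ cut out exactly $\mathscr D_\ell^{(q)}$, and the number of orderings of a given type $\bs k$ is the multinomial coefficient $\binom{\ell}{k_{-1},\dots,k_q}$.

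Every such path stays within ordinate distance $L:=\ell\max(1,q)$ of $k$. Hence for $k$ in the interior range $L\leq k\leq N-L$ with $k/N\to s$, each of the $\ell$ edge weights along a path of type $\bs k$ converges by (b) to the corresponding $a_{j_i}(s)$, and by multiplicativity the path weight converges to $\prod_{j=-1}^q a_j(s)^{k_j}$. Summing over all paths,
$$
\langle x^\ell P_k^N,Q_k^N\rangle\xrightarrow[k/N\to s]{}\sum_{\bs k\in\mathscr D_\ell^{(q)}}\binom{\ell}{k_{-1},\dots,k_q}\prod_{j=-1}^q a_j(s)^{k_j},
$$
and a Cesaro average in $k$ then delivers \eqref{birotho} via a standard Riemann sum argument, the $O(L)$ boundary indices $k<L$ or $k>N-L$ contributing only $O(L/N)=o(1)$ as $N\to\infty$.

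The combinatorial bookkeeping and the pointwise convergence step are immediate adaptations of the three-term case. The only genuinely delicate point, and hence the main obstacle, is ensuring that the pointwise convergence supplied by (b) upgrades to convergence of the Riemann sums, since the $a_j$ are only assumed continuous on the open interval $(0,1)$ and could in principle blow up at the endpoints. This is exactly what the integrability condition (c) takes care of: it guarantees that each integrand $s\mapsto\prod_j a_j(s)^{k_j}$ is bounded and Riemann integrable on $[0,1]$, so that the Riemann-sum limit is justified without any uniformity assumption near $0$ or $1$.
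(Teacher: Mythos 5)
Your proof is correct and follows exactly the route the paper has in mind: the paper does not spell out a proof for this proposition, merely noting that it follows ``the exact same lines of argument'' as Theorem~\ref{meanconv1}, and your path encoding by step types $j\in\{-1,0,\dots,q\}$, the multinomial count of orderings of a fixed multiplicity vector $\bs k$, and the specialization of the weights as $k/N\to s$ are precisely the adaptation of that argument to the banded setting (one can check the formula reduces to the paper's $\sum_m\binom{\ell}{2m}\binom{2m}{m}a^{2m}b^{\ell-2m}$ when $q=1$). Your closing remark that assumption (c) is what makes the Riemann-sum step legitimate, given only interior continuity of the $a_j$, is also the right reading of the hypotheses, at the same level of rigor as the paper's own treatment of the boundary indices in Theorem~\ref{meanconv1}.
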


\begin{OP} For real polynomial ensembles, can we find a similar, or alternative, representation as in Theorem \ref{meanconv1} for the probability measure on $\R$ having for $\ell$-th moment the right hand side of \eqref{birotho}?
\end{OP}


\section{Zeros of average characteristic polynomials} 
\label{sec:polchar}
To a polynomial ensemble $\P_N$, one can associate the \emph{average characteristic polynomial}, 
\eq
\chi_N(z):=\mathbb E\left[\,\prod_{i=1}^N(z-x_i)\right],\qquad z\in\C,
\qe
where the expectation $\E$ refers to $\P_N$. For  OP ensembles, a formula attributed to Heine yields that $\chi_N$ is the $N$-th monic orthogonal polynomial. For several other polynomial ensembles $\chi_N$ have a similar connexion to the leading polynomial $P_N^N$, see e.g. \citep[Section 1]{Har15} and references therein.

Let $z_1,\ldots,z_N\in\C$ be the zeros of $\chi_N$. The next result, extracted from \citep{Har15}, states that these zeros have  the same moments than $\E\hat\mu_N$ asymptotically provided the recurrence coefficients do not grow too fast.

\begin{theorem}
\label{mainth} Let $\P_N$ be a sequence of polynomial ensembles satisfying,  for some $\ell>0$,
\eq
\label{sequence}
\max_{-\ell \leq k,m \leq \ell}\left|\langle  xP_{N+k}^N,Q_{N+m}^N\rangle\right| = o(N^{1/\ell}),\qquad N\rightarrow\infty.
\qe
Then, for every polynomial $P$ of degree $\mathrm{deg}(P)\leq \ell$,
\eq
\label{mainthcsq}
\lim_{N\rightarrow\infty}\left|\E\Bigg[\int P\,\d\hat\mu_N\Bigg] -\frac1N\sum_{i=1}^NP(z_i)\right|=0.
\qe
\end{theorem}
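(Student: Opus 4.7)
The plan is to express both $\E\int P\,d\muh$ and $N^{-1}\sum_i P(z_i)$ as normalized traces of powers of the same finite matrix, and then to control the discrepancy via the path expansion \eqref{key}. By linearity in $P$ it suffices to treat the monomials $P(x)=x^p$ for $0\leq p\leq \ell$. Introduce the $N\times N$ matrix $M_N := [\langle xP_k^N,Q_m^N\rangle]_{m,k=0}^{N-1}$, i.e.\ the representation in the basis $(P_k^N)$ of multiplication-by-$x$ followed by the projection $K_N$. Applying Andr\'eief's identity to the biorthogonal factorization of the kernel produces the Heine-type formula $\chi_N(z)=\det(zI_N-M_N)$, so that the zeros $z_1,\ldots,z_N$ are the eigenvalues of $M_N$ and $\sum_i z_i^p=\tr(M_N^p)$.

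On the mean side, \eqref{hatdef} together with \eqref{key} express $\E\int x^p\,d\muh$ as $N^{-1}\sum_{k=0}^{N-1}\sum_{\gamma:(0,k)\to(p,k)}\prod_{e\in\gamma}w(e)$, summed over \emph{all} paths in $G$, whereas $(M_N^p)_{k,k}$ is the same sum restricted to paths whose ordinate stays in $\{0,\ldots,N-1\}$. The quantity to control is therefore the boundary correction
\[
\frac1N\sum_{k=0}^{N-1}\sum_{\gamma\,\text{exits}\,\{0,\ldots,N-1\}}\prod_{e\in\gamma}w(e).
\]

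The combinatorial cost of exiting is cheap. A path of $p\leq\ell$ steps starting at ordinate $k$ reaches ordinate at most $k+p$, so only $k\in[N-p,N-1]$ (at most $p$ values) can contribute; moreover, since such a path must return to $k$ at time $p$ while its ordinate rises by at most one per step, all intermediate ordinates are forced into a window of bounded width around $N$. Consequently every edge weight along an exiting path is of the form $\langle xP_{N+a}^N,Q_{N+b}^N\rangle$ with $(a,b)$ bounded, the number of exiting paths is a constant $c_p$ independent of $N$, and by \eqref{sequence} each such weight is $o(N^{1/\ell})$. Thus each $p$-fold product is $o(N^{p/\ell})$, and summing over the $O(p)$ admissible $k$'s and dividing by $N$ yields a total error of $o(N^{p/\ell-1})$, which is $o(1)$ for every $p\leq \ell$: a strictly negative power of $N$ when $p<\ell$, and the strict ``little-$o$'' providing $\epsilon_N^\ell\to 0$ when $p=\ell$.

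The only non-combinatorial input is the Heine-type identity $\chi_N(z)=\det(zI_N-M_N)$, which is the conceptual bridge between the probabilistic average characteristic polynomial and the purely algebraic multiplication matrix; granted this, the argument reduces to the path-counting bookkeeping enabled by \eqref{key}. The most delicate point I expect is verifying that the window around $N$ in which the hypothesis \eqref{sequence} is imposed is wide enough to bound \emph{every} edge appearing along an exiting path; a close look shows that the ordinate can drop down to roughly $k-p\geq N-2\ell$ before climbing back, so the naive window $[-\ell,\ell]$ may need to be enlarged to $[-2\ell,\ell]$, a benign clarification that does not affect the structure of the argument.
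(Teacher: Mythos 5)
Your proposal follows the paper's proof essentially verbatim: both use the identity $\chi_N(z)=\det(zI-M_N)$ (stated in the paper as the Fredholm determinant $\chi_N(z)=\det(z-K_NMK_N)$), interpret $\sum_i z_i^p$ and $\E\int x^p\,\d\hat\mu_N$ as path sums via \eqref{key}, and bound the discrepancy by the ``boundary'' paths that leave $\{0,\ldots,N-1\}$. The structure, the counting, and the final $o(N^{p/\ell-1})=o(1)$ estimate all match.

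The one point you flag as delicate---whether the ordinate of an exiting path can drop to roughly $N-2\ell$, forcing the window of indices to be enlarged---is actually not a ``benign clarification.'' Were the window indeed $[-2\ell,\ell]$ the hypothesis \eqref{sequence}, which only controls $\langle xP^N_{N+k},Q^N_{N+m}\rangle$ for $-\ell\leq k,m\leq\ell$, would no longer suffice, so it matters that the window be exactly $[-\ell,\ell]$. Your estimate $k-p$ comes from imposing only the return constraint $\gamma(\ell)=k$, but you must also impose the exit constraint $\gamma\cap D_N\neq\emptyset$ (the path must reach some ordinate $\geq N$), and these two constraints together rule out dipping below $N-\ell$. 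Indeed, let $m_\ast$ be the minimum ordinate of $\gamma$, achieved at step $n_\ast$, and let $n'$ be a step at which $\gamma$ enters $D_N$. If $n'>n_\ast$, then since the ordinate can rise by at most one per step, $m_\ast+(n'-n_\ast)\geq N$, hence $m_\ast\geq N-(n'-n_\ast)\geq N-\ell$. If $n'<n_\ast$, then $n'\geq N-k$ (the path needs at least $N-k$ steps to climb from $k$ to $N$), so $n_\ast>N-k$, and the return constraint $m_\ast+(\ell-n_\ast)\geq k$ gives $m_\ast\geq k-\ell+n_\ast>N-\ell$. In either case $m_\ast\geq N-\ell$, so all edge weights appearing in the correction term carry indices in $[N-\ell,N+\ell]$, exactly as \eqref{sequence} requires. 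With this gap closed your argument is complete and identical to the paper's.
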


The pioneering result of this type is due to \cite{Sim09} and applies to OP ensembles with reference measure $\mu_N$ independent on $N$  with compact support. In this case \eqref{sequence} is automatically satisfied for every $\ell\geq  1$.

Since it is usually harder to derive the asymptotic distribution of the zeros of $\chi_N$ than the one of $\E\hat\mu_N=\frac1NK_N(x,x)\mu_N(\d x)$, this result may yield a simpler approach for obtaining weak limits for the zeros. For example, combined with Theorem \ref{meanconv1}, one recovers from Theorem \ref{mainth} the result of \cite{KuVA99} on weak limits of zeros of orthogonal polynomials.  Similarly, using  results from free probability, one can characterize in full generality the limiting zeros distribution of type II multiple Hermite and Laguerre polynomials in terms of free convolutions and obtain explicit algebraic equations for the Cauchy transform of these limiting distributions, see \cite[Section 3]{Har15}.

Note that in the complex setting the convergence \eqref{mainthcsq} for every polynomial in $z$ does not yield that the limiting zero distribution coincide with the one of $\E\hat\mu_N$. For example, consider the OP ensemble on the unit circle $\mathcal S^1$ with reference measure $\mu_N$  the uniform measure on $\mathcal S^1$, which corresponds to the eigenvalues of a $N\times N$ Haar unitary matrix.  Then $\chi_N(z)=z^N$ whereas $\E\hat\mu_N$ is the uniform measure on $\mathcal S^1$. However both measures have the same logarithmic potential outside of the unit disc. We show in the next corollary that this feature holds true in the general setting of polynomial ensembles.


\begin{corollary} 
\label{balayage}
Assume that  $\E\hat\mu_N$ and $\frac1N\sum\delta_{z_j}$ have subsequences which converge weakly towards $\mu$ and $\nu$ respectively, and that $K:=\Supp(\mu)\cup\Supp(\nu)$ is compact. Then, we have
$$
\int \log\frac{1}{|z-x|}\,\mu(\d x)=\int \log\frac{1}{|z-x|}\,\nu(\d x),\qquad z\in \C\setminus K^*,
$$
where $K^*$ is the smallest (for inclusion) simply connected subset of $\C$ which contains $K$.
\end{corollary}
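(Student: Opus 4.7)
The plan is to combine Theorem \ref{mainth} with the standard chain moments $\leadsto$ Cauchy transform $\leadsto$ logarithmic potential: on the exterior of $K$ the logarithmic potential is completely determined by the Cauchy transform, which in turn is determined by the polynomial moments.

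First, I would use Theorem \ref{mainth} along the extracted subsequence. For any fixed polynomial $P\in\C[z]$, the hypothesis \eqref{sequence} (implicit in the setup of the corollary) gives
\[
\int P\,\d\E\hat\mu_N-\frac1N\sum_{i=1}^N P(z_i)\xrightarrow[N\to\infty]{}0.
\]
Combined with weak convergence and a standard truncation/tightness argument that upgrades to polynomial test functions (using the compactness of $K$ together with a uniform bound on the support or tails of $\E\hat\mu_N$ and $\frac1N\sum\delta_{z_j}$), this yields the complex-moment identity $\int z^k\,\d\mu=\int z^k\,\d\nu$ for every $k\geq 0$.

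Next, introduce the Cauchy transforms $C_\mu(z):=\int (z-w)^{-1}\,\d\mu(w)$ and $C_\nu(z)$. Both are holomorphic on $\C\setminus K$. The geometric expansion $(z-w)^{-1}=\sum_{k\geq 0}w^k/z^{k+1}$, valid for $|z|>\max_{w\in K}|w|$, together with the moment identity forces $C_\mu\equiv C_\nu$ on a neighborhood of infinity. Let $\Omega_\infty$ be the unbounded connected component of $\C\setminus K$; analytic continuation on this connected open set propagates the equality $C_\mu\equiv C_\nu$ to all of $\Omega_\infty$.

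Then I would convert the equality of Cauchy transforms into an equality of logarithmic potentials via the Wirtinger identity $\partial_z\log|z-w|=(2(z-w))^{-1}$, which yields $-2\,\partial_z U^\mu=C_\mu$ for $U^\mu(z):=\int\log|z-w|^{-1}\,\d\mu(w)$. Hence $U^\mu-U^\nu$ is a real-valued harmonic function on $\Omega_\infty$ whose $\partial_z$-derivative vanishes, so it is simultaneously real and holomorphic on the connected set $\Omega_\infty$, hence constant. Because $\mu$ and $\nu$ are probability measures, $U^\mu(z)=-\log|z|+o(1)$ and similarly for $\nu$ as $|z|\to\infty$, so the constant must be zero. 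Since $K^*$ is obtained by adjoining to $K$ the bounded connected components of $\C\setminus K$, one has $\C\setminus K^*=\Omega_\infty$, and the statement follows.

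The main obstacle I expect is the first step: turning the asymptotic identity of Theorem \ref{mainth} together with weak convergence into an actual equality of polynomial moments, since polynomials are unbounded test functions and a priori nothing prevents the supports of $\E\hat\mu_N$ or $\frac1N\sum\delta_{z_j}$ from having slowly decaying tails. One can either invoke a tightness/moment bound derived from \eqref{sequence}, or bypass moments entirely by writing the argument in terms of Cauchy transforms evaluated at a single large $|z|$ where the expansion is uniformly controlled; the remaining steps are then routine complex analysis and potential theory.
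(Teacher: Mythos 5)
Your argument is correct in substance, but it takes a genuinely different route from the paper. The paper fixes $z\in\C\setminus K^*$, uses the simple connectedness of $K^*$ to pick an analytic branch $\phi$ of $x\mapsto\log(x-z)$ on $K^*$, approximates $\phi$ uniformly on the compact $K^*$ by polynomials $P_n$ (Runge/Mergelyan), and then passes to real parts: since $\int\Re P\,\d\mu=\int\Re P\,\d\nu$ for every polynomial $P$ (which follows from the moment identity applied to $P$ and $\bar P$), the uniform convergence $\Re P_n\to\log|z-x|$ on $K^*$ gives the equality of potentials directly. You instead go moments $\to$ Cauchy transforms near $\infty$ $\to$ analytic continuation on the unbounded component $\Omega_\infty$ $\to$ Wirtinger identity $-2\partial_z U^\mu=C_\mu$ $\to$ a real-valued function on a connected set with vanishing $\partial_z$-derivative is constant $\to$ the constant is $0$ by the common $-\log|z|+o(1)$ asymptotics. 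Both routes hinge on the same ingredients (moment equality from Theorem~\ref{mainth} plus the topological fact that $\C\setminus K^*$ is the unbounded component of $\C\setminus K$); yours trades Runge's theorem for analytic continuation and a matching of constants at infinity, which is arguably more self-contained but slightly longer. Your concern about upgrading weak convergence to polynomial-moment equality is legitimate and is not fully spelled out in the paper either — the paper simply invokes compactness of $K$, implicitly assuming the measures $\E\hat\mu_N$ and $\frac1N\sum\delta_{z_j}$ are eventually supported in a fixed compact set or satisfy a uniform moment bound; your patch via tightness or a single evaluation at large $|z|$ is a reasonable way to close this.
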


\begin{proof} Using that $K$ is compact and Theorem \ref{mainth}, it follows that
$$
\int P\,\d\mu=\int P\,\d\nu
$$
for every polynomial $P$ and that these integrals are finite. Because the same holds true after taking the complex conjugate,  we further have 
\eq
\label{re P}
\int \Re P\,\d\mu=\int \Re P\,\d\nu.
\qe
Let $z\in\C\setminus K^*$ be fixed. Since $K^*$ is simply connected, there exists a determination $\phi$ of the map $x\mapsto \log(x-z)$ which is analytic on $K^*$. Since $K^*$ is bounded, there is a sequence of polynomials $P_n$ which approximates uniformly $\phi$ there. By taking the real part, this yields,
$$
\lim_{n\to\infty}\sup_{x\in K^*}\big|\Re P_n(x)-\log{|z-x|}\big|=0.
$$
Combined together with  \eqref{re P}, the corollary follows.

\end{proof}

\begin{OP} Under the assumptions of Corollary \ref{balayage}, can we obtain further information on the relation between $\mu$ and $\nu$? For instance, can we relate the logarithmic potentials of $\mu$ and $\nu$ inside $K^*$ by, say, an inequality?  
\end{OP}

We now turn to the proof of Theorem \ref{mainth}. The key idea is to notice that, using \eqref{def k cor}, one can write  $\chi_N$ as a Fredholm determinant 
\eq
\label{freddet}
\chi_N(z)=\det\big(z-K_N M K_N\big)_{L^2(\mu_N)}\, ,
\qe
where $M :f(x)\mapsto xf(x)$ is the position operator acting on $L^2(\mu_N)$. We refer to \cite[Proposition 2.3]{Har15} for a proof. Note that the operator $K_N M K_N$, seen as an endomorphism of $\mathrm{Im}(K_N)$, has the matrix representation $[\langle x P_{i-1}^N, Q_{j-1}^N\rangle ]_{i,j=1}^N$ in the basis $P_0^N,\ldots,P_{N-1}^N$. Hence in the OP ensemble setting this is just the $N\times N$ Jacobi matrix.

\begin{proof} [Proof of Theorem \ref{mainth}] Assume \eqref{sequence} holds true for some $\ell\geq 1$. To prove the theorem, it is enough to obtain \eqref{mainthcsq} with $P(x)=x^\ell$. Thanks to of the representation \eqref{freddet}, we have
$$
\sum_{i=1}^N z_i^\ell=\Tr\big((K_N MK_N)^\ell\big) =  \sum_{k=0}^{N-1}\langle \,\underbrace{K_NM \cdots K_NM}_{\ell}P_{k}^N , Q_{k}^N\rangle,
$$
where we used that $K_N^2=K_N$. If we introduce 
\eq
\label{DN}
D_N=\Big\{(n,m)\in\N^2 : \; m \geq N\Big\},
\qe
then using the notation of Section \ref{reccoef_sec} we have
\eq
\underbrace{K_NM \cdots K_NM}_{\ell}P_{k}^N=\sum_{m=0}^{N-1}\left(\sum_{\gamma : (0,k)\rightarrow (\ell,m),\;\gamma\cap D_N=\emptyset}w(\gamma)\right)P_{m}^N,
\qe
and hence
$$
\sum_{i=1}^N z_i^\ell=\sum_{k=0}^{N-1}\sum_{\gamma : (0,k)\rightarrow (\ell,k),\;\gamma\cap D_N=\emptyset}w(\gamma).
$$
Since the key formula \eqref{key} and  \eqref{hatdef} moreover yields
\eq
\E\left[\,\int x^\ell\,\d\hat\mu_N\right]=\frac1N\sum_{k=0}^{N-1}\langle x^\ell P_{k}^N , Q_{k}^N\rangle=\frac1N\sum_{k=0}^{N-1}\sum_{\gamma : (0,k)\rightarrow (\ell,k)}\prod_{e\in\gamma}w(e),
\qe
we obtain
\eq
\label{mainth1}
\E\left[\,\int x^\ell\,\d\hat\mu_N\right]-\frac1N\sum_{i=1}^Nz_i^\ell=\frac{1}{N}\sum_{k=0}^{N-1}\sum_{\gamma : (0,k)\rightarrow (\ell,k),\; \gamma\cap D_N\neq\emptyset}w(\gamma).
\qe
Since by following an edge  of the graph $G$ one increases the ordinate by  at most one, the  rightmost sum of \eqref{mainth1} will bring  null contribution if $k$ is  less that $N-\ell$. Observe moreover that the vertices explored by any path $\gamma$ going from $(0,k)$ to $(\ell,k)$ for some $N-\ell\leq k\leq N-1$ such that $\gamma \cap D_N\neq \emptyset$ form a subset of 
\[
B:=\Big\{ (n,m)\in\N^2 : \quad 0 \leq n\leq \ell,\quad N-\ell \leq m < N+\ell \Big\}.
\]
Since $\mathrm{Card}(B)\leq(2\ell)^\ell$, we obtain from \eqref{mainth1} the rough upper bound 
$$
\label{upperb}
\left|\E\left[\,\int x^\ell\,\d\hat\mu_N\right] -\frac1N\sum_{i=1}^Nz_i^\ell\,\right|
\leq \frac{ \left(2\ell\right)^\ell}{N}\max_{N-\ell\leq k,m\leq N+\ell}\left| \langle xP_{k}^N , Q_{m}^N\rangle\right|^\ell,
$$
and the theorem is proven.
\end{proof}

\section{Variance asymptotics}
\label{sec:var}

In this section, we emphasize on how recurrence coefficients allow to control the variance of linear statistics of polynomials ensembles. More precisely, we study for test functions $f$ the variance
\begin{multline}
\label{var_start}
\Var\left[\,\sum_{i=1}^Nf(x_i)\right]  =\int f(x)^{2}K_N(x,x)\mu_N(\d x)\\- \iint f(x) f(y) K_N(x,y)K_N(y,x)\mu_N(\d x)\mu_N(\d y), 
\end{multline}
a formula which follows from \eqref{def k cor} with $k=1$ and $k=2$. One motivation to obtain bounds on the variance is to upgrade the  convergence of $\E\hat\mu_N$ to the almost sure convergence of $\hat\mu_N$.

We start with the simplest setting of real OP ensembles, where in this case the Christoffel--Darboux formula is available and states that
\eq
\label{LN CD}
(x-y)K_N(x,y)= a_N^{N}\big(P_N^{N}(x)P_{N-1}^{N}(y)-P_{N-1}^{N}(x)P_N^{N}(y)\big).
\qe 
It is a direct consequence of the three-term recurrence relation \eqref{3 term rec}. This formula already yields an efficient upper bound on the variance for Lipschitz test functions, as we learned from \citep{PaSh11}.  

\begin{lemma} 
\label{easy bound var}
Let $\P_N$ be a real OP ensemble. For any Lipschitz function $f:\R\to\R$,  we have
$$
\sqrt{\Var\Big[\,\sum_{i=1}^{N}f(x_{i})\Big]}\leq a_{N}^{N}\|f\|_{\mathrm{Lip}},\qquad \|f\|_{\mathrm{Lip}}:=\sup_{x\neq y}\left|\frac{f(x)-f(y)}{x-y}\right|.
$$ 
\end{lemma}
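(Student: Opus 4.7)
The plan is to combine three ingredients: the reproducing property of the projection kernel $K_N$, the Lipschitz bound applied pointwise to $(f(x)-f(y))^2$, and the Christoffel--Darboux formula \eqref{LN CD} to convert $(x-y)^2 K_N(x,y)^2$ into something explicitly integrable via orthonormality.

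First I would rewrite the variance \eqref{var_start} in symmetric form. Since $K_N$ is the kernel of the orthogonal projection on $\mathrm{Vect}(P_0^N,\dots,P_{N-1}^N)$, the reproducing property gives $K_N(x,x)=\int K_N(x,y)^2 \mu_N(\d y)$, and $K_N(x,y)=K_N(y,x)$ is real. Substituting into the first term of \eqref{var_start} and symmetrizing in $(x,y)$ leads to
\[
\Var\!\Big[\sum_{i=1}^N f(x_i)\Big]=\frac12\iint \big(f(x)-f(y)\big)^2 K_N(x,y)^2\,\mu_N(\d x)\mu_N(\d y).
\]

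Next I would apply the Lipschitz hypothesis $(f(x)-f(y))^2\leq \|f\|_{\mathrm{Lip}}^2(x-y)^2$, and then use the Christoffel--Darboux identity \eqref{LN CD} to substitute
\[
(x-y)^2 K_N(x,y)^2=(a_N^N)^2\big(P_N^N(x)P_{N-1}^N(y)-P_{N-1}^N(x)P_N^N(y)\big)^2.
\]
Expanding the square and integrating termwise against $\mu_N\otimes\mu_N$, the orthonormality relations $\int (P_N^N)^2\d\mu_N=\int(P_{N-1}^N)^2\d\mu_N=1$ and $\int P_N^N P_{N-1}^N\,\d\mu_N=0$ reduce the double integral to exactly $2$. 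The factor $\tfrac12$ from the symmetrization then cancels, leaving $\|f\|_{\mathrm{Lip}}^2 (a_N^N)^2$, and the claim follows by taking square roots.

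There is no serious obstacle here; the only point requiring a little care is the symmetrization step, which relies on $K_N$ being an \emph{orthogonal} projection (so that $K_N^2=K_N$ translates into a kernel identity) and on the reference measure being real. Everything else is an immediate application of Christoffel--Darboux together with orthonormality.
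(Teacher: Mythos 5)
Your proposal is correct and follows essentially the same route as the paper: symmetrize the variance formula using the orthogonal-projection (reproducing) property, bound $(f(x)-f(y))^2$ by $\|f\|_{\mathrm{Lip}}^2(x-y)^2$, then use Christoffel--Darboux and orthonormality to evaluate the resulting double integral as $2(a_N^N)^2$. No gaps; the two arguments coincide step for step.
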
 

\begin{proof} We start from the symmetrized representation of the variance
\eq
\label{varrep2}
\Var\left[\,\sum_{i=1}^{N}f(x_{i})\right] =\frac12\iint \big(f(x)-f(y)\big)^{2}K_{N}(x,y)^{2}\mu_{N}(\d x)\mu_{N}(\d y),
\qe
which follows from \eqref{var_start} and that $K_N$ is an orthogonal projection. 
The Christoffel--Darboux formula \eqref{LN CD} and the orthonormality relations then yield
\begin{align*}
\Var\left[\,\sum_{i=1}^{N}f(x_{i})\right]& =\frac12\iint \big(f(x)-f(y)\big)^{2}K_{N}(x,y)^{2}\mu_{N}(\d x)\mu_{N}(\d y)\\
& \leq \frac12\|f\|_{\mathrm{Lip}}^2\iint (x-y)^{2}K_{N}(x,y)^{2}\mu_{N}(\d x)\mu_{N}(\d y)\\
& = \frac12(a_{N}^{N}\|f\|_{\mathrm{Lip}})^2\iint \big(P_N^{N}(x)P_{N-1}^{N}(y)-P_{N-1}^{N}(x)P_N^{N}(y)\big)^{2}\mu_{N}(\d x)\mu_{N}(\d y)\\
& = (a_{N}^{N}\|f\|_{\mathrm{Lip}})^2.
\end{align*}
\end{proof}

\begin{corollary} 
\label{coras}Let $\P_N$ be a sequence of real OP ensembles such that $\frac1Na_N^N$ is square-summable sequence. If $\E\hat\mu_N\to\mu$ weakly, then $\hat\mu_N\to\mu$ weakly almost surely.
\end{corollary}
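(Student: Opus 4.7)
The plan is to combine the variance bound from Lemma \ref{easy bound var} with a standard second-moment + Borel--Cantelli argument, then upgrade from convergence on a single test function to almost sure weak convergence by using a countable convergence-determining family.

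First I would fix a bounded Lipschitz function $f:\R\to\R$ and write $\hat\mu_N(f)=\tfrac1N\sum_{i=1}^Nf(x_i)$, so that
\[
\Var\bigl[\hat\mu_N(f)\bigr]=\frac{1}{N^{2}}\Var\!\left[\sum_{i=1}^{N}f(x_{i})\right]\leq \frac{(a_N^N)^{2}\|f\|_{\mathrm{Lip}}^{2}}{N^{2}}
\]
by Lemma \ref{easy bound var}. The hypothesis that $(a_N^N/N)_N$ is square-summable then gives $\sum_N \Var[\hat\mu_N(f)]<\infty$. For any $\epsilon>0$, Chebyshev's inequality and Borel--Cantelli yield
\[
\P\!\left(\,\big|\hat\mu_N(f)-\E\hat\mu_N(f)\big|>\epsilon \text{ infinitely often}\right)=0,
\]
so $\hat\mu_N(f)-\E\hat\mu_N(f)\to 0$ almost surely. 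Since the assumption $\E\hat\mu_N\to\mu$ weakly gives $\E\hat\mu_N(f)\to \int f\,\d\mu$, we conclude $\hat\mu_N(f)\to \int f\,\d\mu$ almost surely, for each fixed bounded Lipschitz $f$.

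To upgrade this pointwise-in-$f$ statement to almost sure weak convergence, I would invoke the fact that weak convergence of Borel probability measures on $\R$ is metrizable (e.g. by the bounded Lipschitz metric), and in particular that there is a \emph{countable} family $\mathcal F=\{f_j\}_{j\geq 1}$ of bounded Lipschitz functions which is convergence-determining: $\mu_n\to\mu$ weakly iff $\mu_n(f_j)\to\mu(f_j)$ for every $j$. Applying the previous step to each $f_j$, we get for each $j$ a full-measure event $\Omega_j$ on which $\hat\mu_N(f_j)\to \mu(f_j)$; intersecting, $\Omega_\infty:=\bigcap_j\Omega_j$ still has probability one and on $\Omega_\infty$ one has $\hat\mu_N\to\mu$ weakly. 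This gives the claimed almost sure weak convergence.

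The only genuinely delicate point is the reduction to a countable convergence-determining family, which however is standard; everything else is a mechanical combination of Lemma \ref{easy bound var}, Chebyshev, Borel--Cantelli, and the hypothesis $\E\hat\mu_N\to\mu$. I do not expect any real obstacle beyond being careful that the Borel--Cantelli events are indexed by the countable set $\mathcal F$ so that the null sets can be pooled.
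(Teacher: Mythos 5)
Your proof is correct and follows essentially the same route as the paper: Lemma \ref{easy bound var} for the variance bound, Chebyshev plus Borel--Cantelli to get almost sure convergence of $\hat\mu_N(f)$ for each bounded Lipschitz $f$, then pooling over test functions. The only difference is that you make explicit the reduction to a countable convergence-determining family, a step the paper leaves implicit in its closing sentence, but this is a matter of exposition rather than a different argument.
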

\begin{proof} Given any $\epsilon>0$ and any bounded Lipschitz function $f$,  by assumption there exists $N_0$  such that 
$$
\sup_{N\geq N_0}\left|\E\left[\,\int f\,\d\hat\mu_N\right] -\int f \,\d \mu\right|\leq\epsilon.
$$
The Chebyshev inequality and Lemma \ref{easy bound var} then yield that, for every $N\geq N_0$,
\begin{align*}
\P_N\left(\left|\int f\,\d\hat\mu_N -\int f \,\d \mu\right|\geq 2\epsilon\right)& \leq \P_N\left(\left|\int f\,\d\hat\mu_N -\E\left[\,\int f\,\d\hat\mu_N\right]\right|\geq \epsilon\right)\\
& \leq \frac1{\epsilon^2}\Var\left[\,\int f\,\d\hat\mu_N\right]\\
 &= \frac1{N^2\epsilon^2}\Var\left[\,\sum_{i=1}^{N}f(x_{i})\right]\\
&\leq \frac{(a_N^N)^2}{N^2\epsilon^2}.
\end{align*}
By assumption, the right hand side is a summable sequence and hence  Borel--Cantelli's lemma yields that, for any joint probability space $(\Omega,\mathscr F,\P)$ satisfying $\P|_{\mathscr B(\Lambda^N)}=\P_N$, where $\mathscr B(\Lambda^N)$ stands for the Borel sets of $\Lambda^N$, we have
$$
\P\left(\limsup_{N\to\infty}\left|\int f\,\d\hat\mu_N -\int f \,\d \mu\right|\geq 2\epsilon\right)=0.
$$
Since this holds true for any $\epsilon>0$ and any bounded Lipschitz function $f$, this yields $\P(\hat\mu_N\to\mu \mbox{ weakly})=1$ and hence the corollary.
\end{proof}

Besides upper bounds, one may also investigate precise limits of the variances. As we have seen in the proof of Lemma \ref{easy bound var}, the study of the variance is linked to the study of the probability measure on $\R\times\R$,
 \eq
\label{LN d=1}
Q_N(\d x,\d y):=\frac1{2(a_{N}^N)^{2}}(x-y)^2K_N(x,y)^2\mu_{N}(\d x)\mu_{N}(\d y).
\qe
Indeed, \eqref{varrep2} yields the identity 
\eq
\label{varrep3}
\Var\left[\,\sum_{i=1}^{N}f(x_{i})\right]=(a_N^N)^2\iint \left(\frac{f(x)-f(y)}{x-y}\right)^2Q_N(\d x,\d y).
\qe

\begin{theorem}
\label{weak var} 
Let $\P_N$ be a sequence of real OP ensemble.  If there exists $a>0$ and $b\in\R$ such that, for every  fixed $k\in\Z$,
$$
a_{N+k}^N\to a ,\qquad b_{N+k}^{N}\to b,\qquad \mbox{ as } N\to\infty,
$$
then $Q_N$ converge in moments  towards 
\eq
\label{L d=1}
Q(\d x,\d y):=\frac{4a^{2} -(x-b)(y-b)}{4\pi^{2}a^{2}\sqrt{4a^{2}-(x-b)^2}\sqrt{4a^2-(y-b)^{2}}}\,\bv 1_{[-2a+b,2a+b]^{2}}(x,y)\d x\d y.
\qe
\end{theorem}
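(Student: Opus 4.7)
The plan is to compute the joint moments $\iint x^p y^q Q_N(\d x,\d y)$ via the Christoffel--Darboux identity \eqref{LN CD} and then to control each resulting piece through the key formula \eqref{key}, exactly as was done for the mean convergence in Theorem \ref{meanconv1}.

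\textbf{Step 1 (reduction to four inner products).} Squaring \eqref{LN CD} and plugging the result into \eqref{LN d=1} cancels the prefactor $2(a_N^N)^2$ and yields
\begin{multline*}
\iint x^p y^q Q_N(\d x,\d y) = \tfrac{1}{2}\bigl[\langle x^p P_N^N, P_N^N\rangle \langle y^q P_{N-1}^N, P_{N-1}^N\rangle \\
+ \langle x^p P_{N-1}^N, P_{N-1}^N\rangle \langle y^q P_N^N, P_N^N\rangle - 2\langle x^p P_N^N, P_{N-1}^N\rangle \langle y^q P_N^N, P_{N-1}^N\rangle\bigr],
\end{multline*}
so it suffices to understand the large-$N$ limit of inner products of the form $\langle x^r P_{N+\epsilon}^N, P_{N+\epsilon'}^N\rangle$ with $\epsilon,\epsilon'\in\{-1,0\}$.

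\textbf{Step 2 (passage to the limit via paths).} By the key formula \eqref{key}, each such inner product is a polynomial in the finitely many recurrence coefficients $a_{N+k}^N, b_{N+k}^N$ with $|k|\leq r+1$, since a path of length $r$ starting near ordinate $N$ cannot leave a window of width $2r$ around $N$. Plugging in the assumption $a_{N+k}^N\to a$, $b_{N+k}^N\to b$, these polynomials converge to their values at the constant string $(a,\dots,a,b,\dots,b)$. Reusing the combinatorics of the proof of Theorem \ref{meanconv1}, the diagonal terms tend to the arcsine moments $M_p:=\int x^p\,\omega_{[b-2a,b+2a]}(\d x)$, while a direct path count (steps $+1,0,-1$ of total height $-1$ in $p$ steps) gives
$$\langle x^p P_N^N, P_{N-1}^N\rangle \;\longrightarrow\; \sum_{u=0}^{\lfloor (p-1)/2\rfloor} \binom{p}{2u+1}\binom{2u+1}{u}\,a^{2u+1}b^{p-2u-1}\;=:\;C_p.$$
Consequently $\iint x^p y^q Q_N(\d x,\d y) \to M_p M_q - C_p C_q$.

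\textbf{Step 3 (identification with $Q$).} To identify this limit with the joint moments of $Q$, I would perform the trigonometric change of variables $x=b+2a\cos\theta$, $y=b+2a\cos\phi$ on $[0,\pi]^2$. Under this substitution $Q(\d x,\d y)$ becomes $\pi^{-2}(1-\cos\theta\cos\phi)\,\d\theta\d\phi$, so its joint moments split cleanly as $M_p M_q - \tilde M_p \tilde M_q$ with $\tilde M_p:=\tfrac{1}{\pi}\int_0^\pi (b+2a\cos\theta)^p\cos\theta\,\d\theta$. Expanding $(b+2a\cos\theta)^p$ by the binomial formula and evaluating the resulting Wallis integrals reduces the required identity $C_p=\tilde M_p$ to the elementary relation $\binom{2u+2}{u+1}=2\binom{2u+1}{u}$.

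The substantive content lies in Steps 1--2, which are a direct adaptation of the strategy already used for Theorem \ref{meanconv1} to a bilinear statistic. The only place where something new has to be verified is the bookkeeping identification of the off-diagonal limit $C_p$ with the Fourier coefficient $\tilde M_p$ in Step 3; this is a combinatorial check rather than an analytic difficulty, so it should present no real obstacle.
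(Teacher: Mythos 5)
Your proof is correct, and it takes a genuinely different route from the paper's. The paper first reduces (by affine rescaling) to $a=1/2$, $b=0$, proves the convergence for the Chebyshev reference measure $\omega_{[-1,1]}$ by an explicit Fourier computation on $[0,\pi]^2$ under the substitution $x=\cos\theta$, and then transfers to arbitrary $\mu_N$ with the stated recurrence asymptotics via the comparison result Lemma~\ref{compare}. You instead expand $\iint x^p y^q\,Q_N$ directly through the Christoffel--Darboux square into the four inner products $\langle x^r P_{N+\epsilon}^N,P_{N+\epsilon'}^N\rangle$, invoke the key formula \eqref{key} to exhibit each as a polynomial in the recurrence coefficients within an $O(r)$ window of $N$ (thereby obtaining the limit $M_pM_q-C_pC_q$ in closed combinatorial form), and only then identify this expression with the moments of $Q$ by the same cosine substitution and the identity $\binom{2u+2}{u+1}=2\binom{2u+1}{u}$, which I verified holds. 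Your Step~2 effectively re-derives inline the special case of Lemma~\ref{compare} needed here, so the proofs have the same engine but different architecture: the paper's version is more modular --- it isolates a reusable comparison lemma and presents the Chebyshev case as a transparent reference computation --- whereas yours is shorter and self-contained, at the cost of having to close the loop with the explicit combinatorial match between the path-count $C_p$ and the Fourier coefficient $\tilde M_p$, which fortuitously comes down to one elementary binomial relation.
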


The weak convergence $Q_N\to Q$ has been established for the GUE, see Example \ref{GUE}, by \cite{HaTh12}, using the differential equations satisfied by Hermite polynomials. This more general statement is inspired from of \cite[Lemma 4.7]{BaHa16} and only requires asymptotic information on the recurrence coefficients.

The next result is a direct consequence of Theorem \ref{weak var}  and the representation \eqref{varrep3}.

\begin{corollary} 
\label{asptvar}
Under the assumptions of Theorem \ref{weak var}, for every $\mathscr C^{1}$ function $f$ satisfying $f(x)=O(|x|)$ as $x\to\infty$ we have,
\eq
\label{limvar}
\lim_{N\to\infty}\Var\left[\,\sum_{i=1}^{N} f(x_{i})\right]=a^2\iint \left(\frac{f(x)-f(y)}{x-y}\right)^2Q(\d x,\d y).
\qe
\end{corollary}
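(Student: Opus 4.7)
The strategy is to combine the representation \eqref{varrep3} with the moment convergence $Q_N\to Q$ from Theorem \ref{weak var}. Setting $g(x,y) := \left(\frac{f(x)-f(y)}{x-y}\right)^2$, extended continuously to the diagonal by $g(x,x) = f'(x)^2$ using $f \in \mathscr C^1$, the representation \eqref{varrep3} rewrites
\[
\Var\!\left[\sum_{i=1}^{N} f(x_i)\right] = (a_N^N)^2 \iint g\, dQ_N,
\]
and since $a_N^N \to a$, the task reduces to showing $\iint g\, dQ_N \to \iint g\, dQ$.

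The natural next step is a truncation argument. Fix $R > 2a + |b|$ and a continuous cutoff $\eta_R : \R^2 \to [0,1]$ with $\eta_R \equiv 1$ on $[-R,R]^2$ and supported in $[-2R,2R]^2$. Since $\mathrm{supp}(Q) \subset [-R,R]^2$, one has $\iint \eta_R g\, dQ = \iint g\, dQ$. For the bulk, $\eta_R g$ is a continuous compactly supported function on $\R^2$; approximating it uniformly on $[-2R,2R]^2$ by polynomials via Stone--Weierstrass and using the moment convergence of $Q_N$ yields
\[
\iint \eta_R g\, dQ_N \longrightarrow \iint \eta_R g\, dQ = \iint g\, dQ.
\]

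The main obstacle is the tail $\iint (1-\eta_R) g\, dQ_N$, which must vanish uniformly in $N$ as $R\to\infty$. The crucial observation is that the factor $(x-y)^2$ in the density of $Q_N$ in \eqref{LN d=1} exactly cancels the $(x-y)^{-2}$ in $g$:
\[
\iint (1-\eta_R) g\, dQ_N = \frac{1}{2(a_N^N)^2} \iint (1-\eta_R)(x,y)\big(f(x)-f(y)\big)^2 K_N(x,y)^2\, \mu_N(dx)\mu_N(dy).
\]
The growth hypothesis $f(x) = O(|x|)$ gives $(f(x)-f(y))^2 \leq C(1+x^2+y^2)$. Combined with the projection identity $\int K_N(x,y)^2\mu_N(dy) = K_N(x,x)$ and the expression of $\int y^2 K_N(x,y)^2\mu_N(dy)$ as a finite bilinear form in the $P_k^N(x)$ for $k$ near $N$ (via \eqref{3 term rec}), the tail integral reduces to norms of the top orthonormal polynomials on $\{|x|>R\}$, controlled uniformly in $N$ by the moment convergence of $Q_N$ through its density formula. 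Without this cancellation, the potentially unbounded behavior of $f'$ (which the hypothesis $f(x) = O(|x|)$ does not preclude) would obstruct any direct control of the tail; this is the point where the specific structure of $Q_N$ enters decisively.
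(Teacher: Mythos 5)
The paper does not supply an explicit proof of Corollary~\ref{asptvar}; it merely records the (correct) observation that it should follow from Theorem~\ref{weak var} and the representation \eqref{varrep3}. Your proposal fills in the strategy along the natural lines: rewrite the variance as $(a_N^N)^2\iint g\,\d Q_N$ with $g(x,y)=\bigl((f(x)-f(y))/(x-y)\bigr)^2$, exploit the moment convergence $Q_N\to Q$, truncate, and treat bulk and tail separately. This is the right skeleton, and your observation that the $(x-y)^2$ factor in the density of $Q_N$ undoes the $(x-y)^{-2}$ in $g$ is the correct insight for avoiding any dependence on $f'$ at infinity (which the hypothesis $f(x)=O(|x|)$ does not control).

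However, your tail estimate does not close as written. Once you bound $(f(x)-f(y))^2\le C(1+x^2+y^2)$ you have thrown away precisely the cancellation you invoked, and you are left with
$$\frac{C}{2(a_N^N)^2}\iint_{\{|(x,y)|>R\}}(1+x^2+y^2)\,K_N(x,y)^2\,\mu_N(\d x)\mu_N(\d y),$$
a quantity whose full (untruncated) analogue is of order $N$, since $\iint K_N(x,y)^2\,\d\mu_N^{\otimes 2}=\Tr K_N=N$ and $\int y^2K_N(x,y)^2\mu_N(\d y)$ involves $K_N(x,x)\sim N$ in the bulk. To see that restricting to $\{|(x,y)|>R\}$ makes this small as $R\to\infty$ (even only in the $\limsup_{N}$ sense) you must not use the crude bound $(f(x)-f(y))^2\le C(1+x^2+y^2)$ uniformly: you need to split further into $\{|x-y|\ge\delta\}$, where this bound can be reintroduced together with the factor $(x-y)^2$ so that everything is read off the $Q_N$-moments, and $\{|x-y|<\delta\}$, where one must confront $\sup_{|t|>R-\delta}|f'(t)|$. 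The hypotheses as stated ($f\in\mathscr C^1$, $f(x)=O(|x|)$) do not bound $f'$ at infinity, so the near-diagonal far-field contribution is genuinely delicate. Your closing sentence asserting that the tail integral ``reduces to norms of the top orthonormal polynomials on $\{|x|>R\}$, controlled uniformly in $N$ by the moment convergence of $Q_N$'' is plausible in spirit but is not an argument: it is exactly the point where a quantitative estimate must be produced, and the order-$N$ competition must be shown to be beaten. A cleaner route is to first prove the corollary for polynomial $f$ (immediate from moment convergence, since $g$ is then itself a polynomial) and then perform a density argument on a fixed enlargement of $\Supp Q$, using Lemma~\ref{easy bound var} to control $\Var[\sum h(x_i)]$ for the approximation error $h$ together with a separate estimate in the far field.
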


To prove Proposition \ref{weak var}, we start with the following comparison result. 

\begin{lemma} 
\label{compare}
Let $\P_N$ and $\breve \P_N$ be two sequences of real OP ensembles with recurrence coefficients satisfying, as $N\to\infty$,
\eq
\label{assss}
a_{N+k}^N=\breve a_{N+k}^N+o(1),\qquad b_{N+k}^N=\breve b_{N+k}^N+o(1),
\qe
for every fixed $k\in\Z$. Then,  for every bivariate polynomial $P$, we have
$$
\iint P(x,y) \,Q_N(\d x,\d y)=\iint P(x,y) \,\breve Q_N(\d x,\d y)+o(1),\qquad \mbox{ as }N\to\infty.
$$
\end{lemma}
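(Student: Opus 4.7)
The plan is to reduce the statement about the measure $Q_N$ to a statement about polynomial functions of a finite window of recurrence coefficients, and then exploit the assumed $o(1)$ closeness of the two families of coefficients.

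First, I would apply the Christoffel--Darboux identity \eqref{LN CD} to rewrite
\[
Q_N(\d x,\d y) \;=\; \tfrac12 \bigl(P_N^N(x)P_{N-1}^N(y) - P_{N-1}^N(x)P_N^N(y)\bigr)^{2} \mu_N(\d x)\mu_N(\d y),
\]
so that the awkward prefactor $1/(a_N^N)^{2}$ disappears entirely. By bilinearity it suffices to handle monomials $P(x,y)=x^{i}y^{j}$. Expanding the square and using the orthonormality of the $P_k^N$ gives
\begin{align*}
2\iint x^{i}y^{j}\, Q_N(\d x,\d y)
&= \langle x^{i}P_N^N,P_N^N\rangle\,\langle y^{j}P_{N-1}^N,P_{N-1}^N\rangle \\
&\quad+ \langle x^{i}P_{N-1}^N,P_{N-1}^N\rangle\,\langle y^{j}P_N^N,P_N^N\rangle \\
&\quad- 2\,\langle x^{i}P_N^N,P_{N-1}^N\rangle\,\langle y^{j}P_N^N,P_{N-1}^N\rangle.
\end{align*}
Thus the problem is reduced to comparing the quantities $\langle x^{p}P_{N+r}^N,P_{N+s}^N\rangle$ for $p\le\max(i,j)$ and $r,s\in\{-1,0\}$ with their $\breve{\phantom{a}}$-counterparts.

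Next, I would invoke the key formula \eqref{key}: in the real OP setting each such inner product is the sum of path weights along paths of length $p$ on the graph $G$, which stay inside the window $\{k-p\le m \le k+p\}$. Consequently, there exists a polynomial $\mathscr R_{p,r,s}$ in finitely many variables such that
\[
\langle x^{p}P_{N+r}^N,P_{N+s}^N\rangle \;=\; \mathscr R_{p,r,s}\bigl(a_{N+l}^N,\,b_{N+l}^N\;:\;|l|\le p+1\bigr),
\]
and likewise for the $\breve{\phantom{a}}$ ensemble with the same polynomial $\mathscr R_{p,r,s}$. The number of variables and the polynomial are uniform in $N$, depending only on $i,j$.

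Finally, I would conclude by a mean value argument. Because $\breve a_{N+l}^N$ and $\breve b_{N+l}^N$ are themselves bounded in $N$ for each fixed $l$ (which is automatic in the settings where this lemma is applied, e.g.\ Theorem \ref{weak var} where they converge), the hypothesis \eqref{assss} forces both families of recurrence coefficients to lie in a common bounded set inside the relevant finite window. On this compact region the gradient of the polynomial $\mathscr R_{p,r,s}$ is bounded, so
\[
\bigl|\mathscr R_{p,r,s}(a^N,b^N)-\mathscr R_{p,r,s}(\breve a^N,\breve b^N)\bigr| \;=\; O\!\Bigl(\max_{|l|\le p+1}\bigl(|a_{N+l}^N-\breve a_{N+l}^N|+|b_{N+l}^N-\breve b_{N+l}^N|\bigr)\Bigr) \;=\; o(1).
\]
Summing the $o(1)$ contributions from the finitely many terms in the expansion of $2\iint x^{i}y^{j}\, Q_N$ and extending by linearity to an arbitrary bivariate polynomial $P$ yields the conclusion.

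The only delicate point is the uniform boundedness of the arguments of $\mathscr R_{p,r,s}$ along the window; once this is in hand, everything reduces to comparing polynomial evaluations at nearby bounded points. The rest is the bookkeeping of applying the key formula to each of the three terms above.
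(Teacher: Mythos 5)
Your proof follows the same route as the paper: apply the Christoffel--Darboux identity to kill the $1/(a_N^N)^2$ prefactor and rewrite $Q_N$ as half the square of $P_N^N(x)P_{N-1}^N(y)-P_{N-1}^N(x)P_N^N(y)$, reduce to monomials, factorize the double integral into products of inner products of the form $\langle x^p P_{N+r}^N, P_{N+s}^N\rangle$, and observe via the key formula \eqref{key} that each such inner product is a fixed polynomial (independent of $N$) in a finite window of recurrence coefficients around index $N$. Your expansion of the square into three explicit terms is slightly more verbose than the paper's one-line reduction to \eqref{moment 1}--\eqref{moment 2}, but mathematically the same; note the factorization comes from Fubini's theorem, not from orthonormality, so that attribution is off.

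The point where you genuinely add value is the last step. The paper concludes simply by saying that since the inner products are polynomial functions of the windowed coefficients, \eqref{moment 1}--\eqref{moment 2} ``follow from the assumption \eqref{assss}.'' But as you correctly observe, $a_N = \breve a_N + o(1)$ does \emph{not} by itself imply $\mathscr R(a_N)-\mathscr R(\breve a_N)=o(1)$ for a polynomial $\mathscr R$ of degree $\ge 2$; one also needs the arguments to stay in a bounded set (take $\mathscr R(a)=a^2$, $a_N=N$, $\breve a_N=N+c/N$ for a counterexample). Your mean-value argument is the right fix, but it rests on a boundedness hypothesis that the lemma as stated does not contain. You honestly flag this, and point out that it holds in the only application (Theorem \ref{weak var}, where the coefficients converge). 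Strictly speaking, then, the lemma should either include boundedness of (one of) the two families of windowed recurrence coefficients as a hypothesis, or be read as implicitly assuming it; your proof is correct under that added hypothesis, and without it no proof along these lines can close the gap.
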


\begin{proof}  It is enough to prove that for every $m,n\in\N$, 
\[
\lim_{N\to\infty}\left|\iint x^m y^n Q_N(\d x,\d y)-\iint x^m y^n \breve Q_N(\d x,\d y)\right|=0.
\]
Since the Christoffel--Darboux formula \eqref{LN CD} yields
\eq
\label{CDQN}
Q_N(\d x,\d y)=\frac12\big(P_N^{N}(x)P_{N-1}^{N}(y)-P_{N-1}^{N}(x)P_N^{N}(y)\big)^2,
\qe
we see it is enough to show that, for every $m\in\N$,
\eq
\label{moment 1}
\lim_{N\to\infty}\left|\langle x^m P_N^N,P_N^N\rangle_{L^2(\mu_N)} -\langle x^m \breve P_N^N,\breve P_N^N\rangle_{L^2(\breve\mu_N)} \right| =0,
\qe
and 
\eq
\label{moment 2}
\lim_{N\to\infty}\left|\langle x^m P_N^N,P_{N-1}^N\rangle_{L^2(\mu_N)} -\langle x^m \breve P_N^N,\breve P_{N-1}^N\rangle_{L^2(\breve\mu_N)} \right| =0.
\qe
As it follows from the key formula \eqref{key} that $\langle x^m P_N^N,P_N^N\rangle_{L^2(\mu_N)}$ and $\langle x^m P_N^N,P_{N-1}^N\rangle_{L^2(\mu_N)}$ are both polynomial functions in the variables $(a^N_{N+k-1})_{-\ell\leq k\leq \ell+1}$ and $(b^N_{N+k-1})_{-\ell\leq k\leq \ell+1}$, polynomial functions which only depend on $m$. Thus \eqref{moment 1} and \eqref{moment 2} both follow from  the assumption \eqref{assss} and the lemma is proven. 

\end{proof}

\begin{proof}[Proof of Theorem \ref{weak var}] 

By a change of variables, without loss of generality one can assume $a=1/2$ and $b=0$. In this case, 
\eq
\label{Qtest}
Q(\d x,\d y)=\frac{1 -xy}{\pi^{2}\sqrt{1-x^2}\sqrt{1-y^{2}}}\bv 1_{[-1,1]^{2}}(x,y)\d x\d y.
\qe


We first prove the result when the reference measure $\mu_N$ equals the equilibrium measure $\omega_{[-1,1]}$ for every $N$. In this case the orthonormal polynomials $P_k$ are the Chebyshev polynomials, defined by $P_0=1$ and $P_k(\cos\theta)=\sqrt2 \cos(k\theta)$ when $k\geq 1$. The recurrence coefficients are given by $a_k=\bv 1_{k=0}1/\sqrt2+\bv 1_{k\geq 1}1/2$ and $b_k=0$. Let us call $Q_N^*$  the associated measure \eqref{LN d=1}  in this setting. Using the representation \eqref{CDQN}, we see the image of $Q_N^*$ by the change of variables $(x,y)=(\cos\theta,\cos\eta)$, where $\theta,\eta\in[0,\pi]$, reads
\eq
\label{transformed LN}
\frac2{\pi^2} \big(\cos(N\theta)\cos((N-1)\eta)- \cos((N-1)\theta\cos(N\eta)\big)^2\d\theta\d\eta.
\qe
This measure  has for Fourier transform
\begin{multline*}
\frac2{\pi^2} \int_0^\pi\int_0^\pi \mathrm{e}^{i(\theta u+ \eta v)}\big\{\cos( N\theta)\cos((N-1)\eta)- \cos((N-1)\theta)\cos(N\eta)\big\}^2\d\theta\d\eta\\
= \frac2{\pi^2} \int_0^\pi\int_0^\pi\cos(\theta u+ \eta v)\big\{\cos( N\theta)\cos((N-1)\eta)- \cos((N-1)\theta)\cos(N\eta)\big\}^2\d\theta\d\eta.
\end{multline*}
By developing the square in the integrand and linearizing the products of
cosines, we see that the non-vanishing contribution  as $N\to\infty$ of the
Fourier transform are the terms which are independent on $N$ since the $N$-dependent terms come up with a factor $1/N$ after integration. Thus, the Fourier transform equals to
$$
\frac1{\pi^2} \int_0^\pi\int_0^\pi\cos(\theta u+ \eta v)\big(1-\cos\theta\cos\eta\big)\d\theta\d\eta+\cO(1/N).
$$
This yields the weak convergence of \eqref{transformed LN} towards $\pi^{-2}(1-\cos\theta\cos\eta)\d\theta\d\eta$. By taking the image of the measures by the inverse map $(\cos\theta,\cos\eta)\mapsto(x,y)$, we obtain the weak convergence of $Q_N^*$ towards \eqref{Qtest}. Since all these measures are supported on the same compact set $[-1,1]^2$, by approximation this weak convergence extends to convergence in moments.

In the general setting of a reference measure $\mu_N$ such that $a^N_{N+k}\to1/2$ and $b^N_{N+k}\to 0$ as $N\to\infty$ for every fixed $k$, Lemma \ref{compare} yields that, for any bivariate polynomial $P$, 
$$
\left|\int P\, \d Q_N-\int P\, \d Q\right|\leq \left|\int P\, \d Q_N-\int P\, \d Q_N^*\right|+\left|\int P\, \d Q_N^*-\int P\, \d Q\right|\xrightarrow[N\to\infty]{}0,
$$
which proves the theorem.

\end{proof}

In the general setting of polynomial ensembles, the Christoffel--Darboux formula is not available anymore, but one can still obtain upper bounds on the variances, as well as a comparison result, thanks to a similar path representation as in the key formula \eqref{key}. 

\begin{theorem}
\label{variance_univ} 
\begin{itemize}
\item[{\rm(a)}]Let $\P_N$ be a polynomial ensemble.  For every $\ell\geq 1$, we have
\eq
\label{varupperb}
\Var\Bigg[\,\sum_{i=1}^Nx_i^\ell\Bigg] 
\leq (2\ell)^{2\ell}\max_{-\ell\leq k,m\leq \ell}\left| \langle xP_{N+k}^N , Q_{N+m}^N\rangle\right|^{2\ell}.
\qe
\item[{\rm(b)}]Let $\P_N$ and $\breve \P_N$ be two sequences of polynomial ensembles satisfying
$$
\langle xP_{N+k}^N,Q_{N+m}^N\rangle_{L^2(\mu_N)}=\langle x\breve P_{N+k}^N,\breve Q_{N+m}^N\rangle_{L^2(\breve\mu_N)}+o(1),\qquad N\to\infty,
$$
for every fixed $k,m\in\Z$. Then,  for every univariate polynomial $P$,
$$
\Var\left[\,\sum_{i=1}^NP(x_i)\right]=\breve\Var\left[\,\sum_{i=1}^NP(x_i)\right]+o(1),\qquad N\to\infty.
$$
\end{itemize}
\end{theorem}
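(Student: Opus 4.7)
The plan is to extend the path-counting approach used in the proof of Theorem \ref{mainth} to the variance formula \eqref{var_start}, combining it with the key formula \eqref{key}.

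\textbf{Part (a).} First I will expand \eqref{var_start} applied to $f(x)=x^\ell$ using the kernel expression \eqref{kernel} and the biorthogonality relations \eqref{biortho}. A direct computation yields
$$
\Var\Bigg[\,\sum_{i=1}^N x_i^\ell\Bigg] = \sum_{k=0}^{N-1}\langle x^{2\ell} P_k^N, Q_k^N\rangle \;-\; \sum_{k,j=0}^{N-1}\langle x^\ell P_k^N, Q_j^N\rangle\langle x^\ell P_j^N, Q_k^N\rangle.
$$
Writing $x^{2\ell}=x^\ell\cdot x^\ell$ and expanding $x^\ell P_k^N$ in the biorthogonal basis via \eqref{key}, one obtains $\langle x^{2\ell}P_k^N, Q_k^N\rangle = \sum_{m\ge 0}\langle x^\ell P_k^N, Q_m^N\rangle\langle x^\ell P_m^N, Q_k^N\rangle$, a finite sum. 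The contributions with $m\le N-1$ cancel against the second sum, leaving the compact identity
$$
\Var\Bigg[\,\sum_{i=1}^N x_i^\ell\Bigg] = \sum_{k=N-\ell}^{N-1}\,\sum_{m=N}^{k+\ell}\langle x^\ell P_k^N, Q_m^N\rangle\langle x^\ell P_m^N, Q_k^N\rangle,
$$
which involves at most $\ell^2$ terms. I then apply \eqref{key} to each factor separately. For the first factor, any path $\gamma:(0,k)\to(\ell,m)$ satisfies $m-\ell+n\le r_n\le k+n$ at its $n$-th vertex, so with $k\in[N-\ell,N-1]$ and $m\in[N,N+\ell-1]$ the ordinate is confined to a strip around $N$ of width $O(\ell)$; an analogous confinement holds for the paths giving the second factor. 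Hence the number of contributing paths in each factor is bounded by $(2\ell)^\ell$ up to absolute constants, and each edge weight is bounded by the maximum recurrence coefficient over an $N$-centered window of size $O(\ell)$. Multiplying the two bounds yields \eqref{varupperb}.

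\textbf{Part (b).} The derivation above exhibits $\Var[\sum x_i^\ell]$ as a \emph{universal} polynomial expression (depending only on $\ell$) in the local recurrence coefficients $\langle xP_{N+i}^N,Q_{N+j}^N\rangle$ with $|i|,|j|$ bounded by a constant multiple of $\ell$; universal here meaning that the polynomial formula is the same for every polynomial ensemble. By the hypothesis of (b), each such coefficient agrees for $\P_N$ and $\breve\P_N$ up to $o(1)$, and since the polynomial involves only finitely many of them and is continuous in its arguments, the two variances differ by $o(1)$. For a general polynomial $P=\sum_\ell c_\ell x^\ell$, I expand by the bilinearity of the covariance,
$$
\Var\Bigg[\,\sum_{i=1}^N P(x_i)\Bigg] = \sum_{\ell,\ell'} c_\ell c_{\ell'}\,\Cov\Bigg[\,\sum_{i=1}^N x_i^\ell,\,\sum_{i=1}^N x_i^{\ell'}\Bigg],
$$
and verify that each mixed covariance admits an analogous universal path-based polynomial representation. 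Summing finitely many such asymptotic equalities yields the stated comparison.

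The main obstacle I anticipate is the algebraic bookkeeping in part (a), especially verifying the cancellation that reduces the variance to index pairs with $k\in[N-\ell,N-1]$ and $m\ge N$, and carefully tracking the strip in which paths live so as to identify the finite set of recurrence coefficients that govern the variance. Once that structure is in hand, both the quantitative bound \eqref{varupperb} and the comparison statement follow from near-mechanical applications of \eqref{key}.
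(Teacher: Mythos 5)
Your proof is essentially the paper's argument in a slightly different guise. The paper writes $\Var[\sum x_i^\ell]=\Tr(K_NM^{2\ell}K_N)-\Tr(K_NM^\ell K_NM^\ell K_N)$ and represents this difference as a sum over paths $\gamma:(0,k)\to(2\ell,k)$ whose ordinate at abscissa $\ell$ satisfies $\gamma(\ell)\geq N$; slicing those paths at abscissa $\ell$ (i.e.\ summing over the intermediate ordinate $m=\gamma(\ell)$) gives exactly your identity $\Var[\sum x_i^\ell]=\sum_{k=N-\ell}^{N-1}\sum_{m=N}^{k+\ell}\langle x^\ell P_k^N,Q_m^N\rangle\langle x^\ell P_m^N,Q_k^N\rangle$, and the localization of paths and the universal-polynomial observation used for (b) are the same.

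One small caveat about the ``analogous confinement'' you invoke for the second factor, which the paper glosses over in the same way: in a \emph{general} polynomial ensemble an edge may drop the ordinate by an arbitrary amount (only upward steps are bounded by $+1$), so the paths expanding $\langle x^\ell P_m^N,Q_k^N\rangle$ with $m\in[N,N+\ell-1]$ and $k\in[N-\ell,N-1]$ can climb up to ordinate $m+\ell-1\leq N+2\ell-2$ before plunging back to $k$, and can dip down to $k-\ell+1\geq N-2\ell+1$. Thus the honest window for the maximum in \eqref{varupperb}, and for the finite family of recurrence coefficients relevant to (b), is $|k|,|m|\leq 2\ell$ rather than $\leq\ell$; this only alters the constant and enlarges a fixed window, so both parts of the statement survive. (For banded or real OP ensembles, where the downward step size is also bounded, the tighter window $[-\ell,\ell]$ claimed by the paper is indeed correct.)
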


Part (a) is taken from \citep{Har15}.

\begin{proof} We start from the representation \eqref{var_start}, 
\[
\Var\left[\,\sum_{i=1}^Nx_i^\ell\right]  =\int x^{2\ell}K_N(x,x)\mu_N(\d x)- \iint x^\ell y^\ell K_N(x,y)K_N(y,x)\mu_N(\d x)\mu_N(\d y).
\]
Using the position operator $Mf(x)=xf(x)$ this can be alternatively written as
\eq
\label{B}
\Var\left[\,\sum_{i=1}^Nx_i^\ell\right] =  {\rm Tr}\big(K_N M^{2\ell}K_N\big)-{\rm Tr}\big(K_N M^\ell K_N M^\ell K_N\big).
\qe
We have from the key formula \eqref{key},
\eq
\label{BB}
{\rm Tr}\big(K_N M^{2\ell}K_N\big)=\sum_{k=0}^{N-1}\langle x^{2\ell} P_{k}^N , Q_{k}^N\rangle=\sum_{k=0}^{N-1}\sum_{\gamma : (0,k)\rightarrow (2\ell,k)}w(\gamma).
\qe
Since 
\[
(K_NM^\ell K_NM^\ell K_N )P_{k}^N=\sum_{m=0}^{N-1}\left(\sum_{\gamma : (0,k)\rightarrow (\ell,m),\;\gamma(\ell)<N}w(\gamma)\right)P_{m}^N,
\]
where  $\gamma(\ell)$ stands for the ordinate of the path $\gamma$ at abscissa $\ell$,  we moreover obtain  
\[
{\rm Tr}\big(K_N M^\ell K_N M^\ell K_N\big)=\sum_{k=0}^{N-1}\sum_{\gamma : (0,k)\rightarrow (2\ell,k),\; \gamma(\ell)<N}w(\gamma).
\]
Combined with \eqref{B}--\eqref{BB} this yields
\eq
\label{pathvar}
\Var\left[\,\sum_{i=1}^Nx_i^\ell\right] =\sum_{k=0}^{N-1}\sum_{\gamma : (0,k)\rightarrow (2\ell,k),\; \gamma(\ell)\geq N}w(\gamma).
\qe
Since at each step a path can increase its ordinate by at most one, the condition $\gamma(\ell)\geq N$ yields the contributing paths in right hand side of \eqref{pathvar} having vertices lying within the set
\[
\Big\{ (n,m)\in\N^2 : \quad 0 \leq n\leq 2\ell,\quad N-\ell \leq m < N+\ell \Big\}.
\]
Thus we have the rough upper bound 
$$
\label{upperb}
\Var\left[\,\sum_{i=1}^Nx_i^\ell\right]
\leq { \left(2\ell\right)^\ell}\max_{N-\ell\leq k,m\leq N+\ell}\left| \langle xP_{k}^N , Q_{m}^N\rangle\right|^{2\ell},
$$
which proves (a). Moreover,  this shows \eqref{pathvar} is a polynomial function in the variables  $\langle xP^{N}_{N+k} , Q_{N+m}^N\rangle$ where $-\ell\leq k,m\leq\ell$, a polynomial function which only depends on $\ell$, and (b) follows.

\end{proof}

Now the same proof as for Corollary \ref{coras} yields the following upgrade for the moment convergence of real polynomial ensembles. 

\begin{corollary} Let $\P_N$ be a sequence of real polynomial ensembles with recurrence coefficients satisfying the growth assumption \eqref{sequence} for every $\ell\geq 1$. If $\E\hat\mu_N\to\mu$ in moments, then $\hat\mu_N\to\mu$ in moments almost surely.
\end{corollary}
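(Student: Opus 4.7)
The plan is to mimic the proof of Corollary \ref{coras}, replacing Lemma \ref{easy bound var} (which relies on the Christoffel--Darboux formula and is therefore restricted to real OP ensembles) by the variance bound of Theorem \ref{variance_univ}(a), which is available for arbitrary polynomial ensembles. Since we only need convergence in moments, it suffices to test against monomials.

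Fix $\ell \geq 1$; the goal is to show $\int x^\ell \, \d\hat\mu_N \to \int x^\ell \, \d\mu$ almost surely. By hypothesis $\E\int x^\ell\,\d\hat\mu_N \to \int x^\ell\,\d\mu$, so the Chebyshev inequality reduces the task to showing that
$$
\frac{1}{N^2}\Var\!\left[\,\sum_{i=1}^N x_i^\ell\right]
$$
is summable in $N$. Theorem \ref{variance_univ}(a) bounds this from above by
$$
\frac{(2\ell)^{2\ell}}{N^2}\max_{-\ell \leq k,m \leq \ell}\left|\langle xP_{N+k}^N, Q_{N+m}^N\rangle\right|^{2\ell}.
$$

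The key step is to exploit that the growth assumption \eqref{sequence} is assumed for \emph{every} $\ell' \geq 1$, not only for $\ell' = \ell$. I would apply it with some $\ell' > 2\ell$, say $\ell' = 2\ell + 1$. Since the window $\{-\ell \leq k,m \leq \ell\}$ is contained in the larger window $\{-\ell' \leq k,m \leq \ell'\}$, this yields
$$
\max_{-\ell \leq k,m \leq \ell}\left|\langle xP_{N+k}^N, Q_{N+m}^N\rangle\right|^{2\ell} = o\!\left(N^{2\ell/(2\ell+1)}\right),
$$
so the Chebyshev tail is $o(N^{2\ell/(2\ell+1) - 2})$ with exponent strictly less than $-1$, hence summable.

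Borel--Cantelli then yields, exactly as in the final step of Corollary \ref{coras}, that $\int x^\ell \, \d\hat\mu_N \to \int x^\ell \, \d\mu$ almost surely on any joint probability space extending the $\P_N$. Since this holds for each fixed $\ell \geq 1$, intersecting the corresponding countable family of full-probability events produces a single event of probability one on which all polynomial moments of $\hat\mu_N$ converge to those of $\mu$, i.e.\ $\hat\mu_N \to \mu$ in moments almost surely. The only delicate point is calibrating $\ell'$ in terms of $\ell$: the growth rate $o(N^{1/\ell'})$ must be raised to the power $2\ell$ and still beat the normalization $N^2$, which forces $\ell' > 2\ell$. Everything else is a routine adaptation of the Lipschitz argument from Corollary \ref{coras}.
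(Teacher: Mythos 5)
Your proof is correct and essentially the same strategy as the paper, which states only that ``the same proof as for Corollary \ref{coras}'' yields the result. However, your write-up fills in a genuinely nontrivial calibration detail that the paper's terse remark sweeps under the rug: if one applies the growth assumption \eqref{sequence} with the \emph{same} $\ell$ as appears in the variance bound of Theorem \ref{variance_univ}(a), the bound $o(N^{1/\ell})$ raised to the power $2\ell$ gives only $\Var[\sum_i x_i^\ell]/N^2 = o(1)$, which is \emph{not} summable, so Borel--Cantelli does not apply directly. Your fix --- observe that \eqref{sequence} is assumed for \emph{all} $\ell'$, so apply it with $\ell' = 2\ell+1$ and use that the window $\{-\ell \le k,m \le \ell\}$ sits inside $\{-\ell' \le k,m \le \ell'\}$ --- is exactly what is needed; it yields $\Var/N^2 = o(N^{2\ell/(2\ell+1)-2})$ with exponent $-(2\ell+2)/(2\ell+1) < -1$, hence summable, and $\ell' = 2\ell+1$ is in fact the minimal integer choice since summability forces $\ell' > 2\ell$. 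The final countable intersection over $\ell \ge 1$ is the correct analogue of the quantifier ``for every $\epsilon > 0$ and every bounded Lipschitz $f$'' used implicitly at the end of Corollary \ref{coras}. In short: same approach, but you have correctly identified and resolved the one step where the naive transcription of the earlier proof would fail.
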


\section{Higher order cumulants and fluctuations}
\label{sec:fluct}

After studying the variance asymptotics, it is natural to investigate higher order cumulants. We briefly review here recent results concerning this question and emphasize on the role played by recurrence coefficients. More precisely, the cumulants $\kappa_n[X]$ of a real random variable $X$ are defined through the series expansion of the log-Laplace transform,
$$
\log\E[\mathrm{e}^{zX}]=\sum_{n=1}^\infty \kappa_n[X] z^n,
$$
provided this makes sense. Thus we have $\kappa_1[X]=\E[X]$ and $\kappa_2[X]=\Var[X]$. A useful characterization of a real gaussian random variable $X$ is that all its cumulants vanish for $n\geq 3$. This is sometimes useful to prove convergence in law of a properly rescaled random variable towards a gaussian limit, such as in the classical central limit theorem (CLT). 

In the setting of real polynomial ensembles satisfying a finite-term recurrence relation, many interesting results for the cumulants of linear statistics $\sum f(x_i)$  have been obtained by \cite{BrDu13}. In particular they obtain central limit theorems for the fluctuations of the linear statistics when convergence of the recurrence coefficients is assumed. These results have been recovered by \cite{Lam15b} using technics more in the spirit of this note, namely involving sums over paths weighted by the recurrence coefficients. 

Specifically, one can extract from \citep{BrDu13} the following results, which should be put in perspective with Theorem \ref{variance_univ}(b) and Theorem \ref{weak var}: Let us say that a sequence of polynomial ensembles $\P_N$ is \emph{banded} if there exists $R\geq 1$ independent on $k,m,N$ such that $\langle xP_{k}^N,Q_{k-m}^N\rangle_{L^2(\mu_N)}=0$ for any $m> R$ and $k\geq 0$.

\begin{theorem} 
\label{BD}
Let $\P_N$ and $\breve \P_N$ be  real and banded polynomial ensembles satisfying
$$
\langle xP_{N+k}^N,Q_{N+m}^N\rangle_{L^2(\mu_N)}=\langle x\breve P_{N+k}^N,\breve Q_{N+m}^N\rangle_{L^2(\breve\mu_N)}+o(1),\qquad N\to\infty,
$$
for every fixed $k,m\in\Z$. Then,  for every $n\geq 2$ and every univariate polynomial $P$,
$$
\kappa_n\left[\,\sum_{i=1}^NP(x_i)\right]=\breve\kappa_n\left[\,\sum_{i=1}^NP(x_i)\right]+o(1),\qquad N\to\infty.
$$
Moreover, if $\langle xP_{N+k}^N,Q_{N+m}^N\rangle_{L^2(\mu_N)}$ has a limit as $N\to\infty$ for every $k,m\in\Z$, then
\eq
\label{limcum}
\lim_{N\to\infty}\kappa_n\left[\,\sum_{i=1}^NP(x_i)\right]=0,\qquad n\geq 3.
\qe
\end{theorem}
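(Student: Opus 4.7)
The plan is to generalise the path representation used in the proof of Theorem~\ref{variance_univ} from variance to higher-order cumulants. Since $K_N$ is a finite-rank projection, the Fredholm determinant identity yields
$$
\log\E\bigl[e^{t\sum_iP(x_i)}\bigr]=\sum_{m\ge1}\frac{(-1)^{m+1}}{m}\,\Tr\bigl((M_{e^{tP}-1}K_N)^m\bigr),
$$
where $M_\phi$ denotes multiplication by $\phi$ on $L^2(\mu_N)$. Extracting the coefficient of $t^n/n!$ expresses $\kappa_n[\sum_iP(x_i)]$ as an explicit rational linear combination, depending only on $n$, of traces $\Tr(M_{P^{k_1}}K_N\cdots M_{P^{k_m}}K_N)$ with $k_1+\cdots+k_m=n$. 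Each such trace expands, via the key formula \eqref{key} and the identity $K_NP_j^N=P_j^N\IND_{j<N}$, into a finite sum of weighted paths on $G$ of total length at most $n\deg P$, returning to their starting ordinate, with the constraint that the ordinate be $<N$ at the positions of the intermediate $K_N$'s, exactly as in the proof of Theorem~\ref{variance_univ}(a).

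The combinatorial heart of the argument is the telescoping identity that, in the cumulant linear combination, paths which never visit the half-plane $D_N=\{m\ge N\}$ contribute zero. For $n=2$ this is a direct calculation in the biorthogonal basis: expanding $\Tr(M_{P^2}K_N)-\Tr(M_PK_NM_PK_N)$ and using biorthogonality gives
$$
\kappa_2\Bigl[\,\sum_iP(x_i)\Bigr]=\sum_{\substack{i<N\\k\ge N}}\langle PP_i^N,Q_k^N\rangle\langle PP_k^N,Q_i^N\rangle;
$$
the same mechanism, organised over partitions of $\{1,\dots,n\}$, yields the general statement. Once established, the banded hypothesis is exploited as follows: each edge of $G$ changes the ordinate by at most $\max(R,1)$, so any boundary-crossing path of total length $\le n\deg P$ must start at an ordinate in $[N-Cn,\,N-1]$ and remain within $[N-Cn,\,N+Cn]$, for a constant $C=C(R,\deg P)$. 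Hence $\kappa_n[\sum_iP(x_i)]$ is a polynomial, depending only on $n$ and $P$, in the finite collection of recurrence coefficients $\langle xP_{N+j}^N,Q_{N+\ell}^N\rangle$ with $|j|,|\ell|\le Cn$. Part (a) follows at once, since by hypothesis these arguments coincide up to $o(1)$ between $\P_N$ and $\breve\P_N$.

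For part (b), apply part (a) to a reference banded ensemble $\breve\P_N$ whose recurrence coefficients for indices near row $N$ are taken constant in $k$ and equal to the prescribed limits (extended arbitrarily elsewhere). In this translation-invariant reference the weight of any boundary-crossing path depends only on its shape and not on its vertical position, so the sum defining $\breve\kappa_n$ reduces to a fixed, $N$-independent combinatorial expression in the limit coefficients that is amenable to a Szeg\H{o}-type analysis via Fourier series on the circle; this expression is seen to vanish for every $n\ge3$ and every polynomial $P$. The principal obstacle is the order-$n$ cancellation described in the second paragraph, which generalises the elementary identity $\Tr(K_NM^{2\ell}K_N)-\Tr(K_NM^\ell K_N M^\ell K_N)=\sum_{k,\gamma:\gamma(\ell)\ge N}w(\gamma)$ underlying the proof of Theorem~\ref{variance_univ}.
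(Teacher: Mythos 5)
The paper does not itself prove Theorem~\ref{BD}: it is presented as a result ``extracted from'' Breuer--Duits and later recovered by Lambert via the path method. Your proposal is in the spirit of these cited proofs, and the architecture is right: the log-Fredholm-determinant expansion writing $\kappa_n$ as a rational linear combination of traces $\Tr(M_{P^{j_1}}K_N\cdots M_{P^{j_m}}K_N)$, the path expansion of each trace via \eqref{key} with the constraint that the ordinate stay below $N$ at the intermediate insertions of $K_N$, and the observation that the banded hypothesis confines boundary-crossing paths to an $O(1)$ window around ordinate $N$, from which part (a) is immediate once one has the key cancellation.

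That cancellation is exactly the place where the proposal stops being a proof and becomes a plan. For $n=2$ you verify it correctly (and your $\kappa_2$ formula matches the $D_N$-crossing representation already used for Theorem~\ref{variance_univ}(a)). But the claim that ``the same mechanism, organised over partitions of $\{1,\dots,n\}$, yields the general statement'' is the entire content of the higher-cumulant theorem: one must show that when the $\frac{(-1)^{m+1}}{m}\frac{n!}{j_1!\cdots j_m!}$ coefficients from the log-det series are summed, the contribution of every path configuration whose segments never enter $D_N$ vanishes identically. This is a genuinely nontrivial combinatorial identity (heuristically it reflects $\sum_m\frac{(-1)^{m+1}}{m}(e^{tP}-1)^m=tP$, so only $\kappa_1$ would survive if all $K_N$'s acted as the identity), and it cannot be asserted by analogy with $n=2$; it needs a proof.

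Part (b) has a further gap. You invoke part (a) against a reference $\breve\P_N$ whose recurrence coefficients near row $N$ are prescribed constants ``extended arbitrarily elsewhere.'' Part (a), as you prove it, compares two genuine banded polynomial ensembles, but there is no reason an arbitrary banded recurrence array corresponds to a valid polynomial ensemble (positive-definite kernel, biorthogonal families, etc.). The correct move, which your own paragraph~3 makes available, is to observe that $\kappa_n[\sum P(x_i)]$ has been shown to be a fixed polynomial in the finitely many entries $\langle xP_{N+j}^N,Q_{N+\ell}^N\rangle$, $|j|,|\ell|\le Cn$; one then evaluates that polynomial at the limiting translation-invariant array directly, without needing to realize it as an ensemble. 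Finally, even granting this, the assertion that the resulting $N$-independent expression ``is seen to vanish'' via a Szeg\H{o}-type Fourier argument is again stated, not shown: the banded translation-invariant right limit becomes multiplication by a Laurent polynomial on $L^2(\mathbb T)$ with $K_N$ a Hardy projection, and the vanishing of $\kappa_n$ for $n\ge3$ is precisely the Kac--Akhiezer--Szeg\H{o}/strong Szeg\H{o} computation, which needs to be carried out. So: correct roadmap, consistent with the cited literature, but the two central lemmas (the all-orders path cancellation and the Laurent-operator vanishing) are currently unproved claims.
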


As explained above, \eqref{limcum} implies Gaussian fluctuations for the linear statistics $\sum P(x_i)$ once centered and reduced.  Combined with Theorem \ref{weak var} and Corollary \ref{asptvar} this  yields for instance a general CLT for real OP ensembles:

\begin{corollary}
Let $\P_N$ be a sequence of real OP ensemble.  If there exists $a>0$ and $b\in\R$ such that, for every  fixed $k\in\Z$,
$$
a_{N+k}^N\to a ,\qquad b_{N+k}^{N}\to b,\qquad \mbox{ as } N\to\infty,
$$
then, for any polynomial $P$, we have the convergence in law to a Gaussian random variable,
\[
\sum_{i=1}^NP(x_i)-\E\left[\,\sum_{i=1}^NP(x_i)\right] \xrightarrow[N\to\infty]{*}\mathcal N(0,\sigma_P^2)\,,
\]
where the limiting variance is given by
\eq
\label{limvarOP}
\sigma_P^2:=a^2\int_{-2}^2\int_{-2}^2 \left(\frac{f(ax+b)-f(ay+b)}{x-y}\right)^2 \frac{4-xy}{\sqrt{4-x^2}\sqrt{4-y^2}}\,\d x\d y.
\qe
\end{corollary}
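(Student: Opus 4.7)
The plan is to combine the vanishing-cumulants statement of Theorem \ref{BD} with the variance asymptotic of Corollary \ref{asptvar}, and then invoke the method of moments to upgrade convergence of all cumulants to convergence in law. Three ingredients are needed: (i) our sequence fits the hypotheses of Theorem \ref{BD}; (ii) Corollary \ref{asptvar} gives the limiting variance, which I can rewrite in the explicit form \eqref{limvarOP} via an affine change of variables; (iii) convergence of cumulants to those of a Gaussian implies convergence in distribution.

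For (i), I would first observe that a real OP ensemble is automatically \emph{banded} in the sense preceding Theorem \ref{BD} with $R=1$, because the three-term recurrence \eqref{3 term rec} forces $\langle xP_k^N, P_{k-m}^N\rangle = 0$ for every $m>1$. Under the assumption $a_{N+k}^N\to a$ and $b_{N+k}^N\to b$, every coefficient $\langle xP_{N+k}^N, Q_{N+m}^N\rangle$ (which is either $a_{N+k}^N$, $b_{N+k}^N$, $a_{N+k-1}^N$, or $0$) admits a limit as $N\to\infty$ for every fixed $k,m\in\Z$. Hence the second half of Theorem \ref{BD} applies and yields
\[
\lim_{N\to\infty}\kappa_n\!\left[\,\sum_{i=1}^N P(x_i)\right]=0,\qquad n\geq 3,
\]
for every univariate polynomial $P$.

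For (ii), since $P$ is $\mathcal{C}^1$ and polynomially bounded, Corollary \ref{asptvar} applies and gives the variance limit
\[
\lim_{N\to\infty}\Var\!\left[\,\sum_{i=1}^N P(x_i)\right]=a^2\iint\left(\frac{P(x)-P(y)}{x-y}\right)^{\!2}\!Q(\d x,\d y),
\]
with $Q$ given by \eqref{L d=1}. To reach \eqref{limvarOP}, I would apply the affine substitution $x=au+b$, $y=av+b$: the density of $Q$ behaves homogeneously under this map (the numerator picks up $a^2$, each square-root factor in the denominator picks up $a$, and $\d x\,\d y$ picks up $a^2$), while the difference quotient contributes a factor $a^{-2}$. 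Gathering the scalings yields the explicit expression \eqref{limvarOP} on $[-2,2]^2$.

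Finally, for (iii), I have $\kappa_1=0$ (centering), $\kappa_2\to \sigma_P^2$, and $\kappa_n\to 0$ for $n\geq 3$. Since moments and cumulants are related polynomially, it follows that every moment of the centered linear statistic converges to the corresponding moment of $\mathcal{N}(0,\sigma_P^2)$; as the Gaussian is determined by its moments, this implies convergence in distribution. The main potential obstacle is not the logical structure but the \emph{constant bookkeeping} in step (ii)---one must check carefully that the factors of $a$ and $\pi$ produced by the change of variables reproduce \eqref{limvarOP} exactly. A secondary point requiring care is that Theorem \ref{BD}, as stated, assumes the ensemble is banded and that the recurrence coefficients have limits in the strong uniform-in-$k$ sense; both of these are guaranteed here by the three-term recurrence and the hypothesis on $a_{N+k}^N, b_{N+k}^N$, so no additional growth estimate is needed.
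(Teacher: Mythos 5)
Your proposal is correct and follows exactly the route the paper indicates: deduce bandedness ($R=1$) and convergence of each recurrence coefficient from the three-term recurrence, invoke Theorem~\ref{BD} for $\kappa_n\to 0$ ($n\geq 3$), invoke Corollary~\ref{asptvar} for $\kappa_2$, and conclude by the method of cumulants/moments. One remark on the bookkeeping you flagged: carrying out the affine substitution $x=au+b$, $y=av+b$ in Corollary~\ref{asptvar} with $Q$ from \eqref{L d=1} actually yields the prefactor $\tfrac{1}{4\pi^2}$ rather than $a^2$, which suggests a typo in the displayed formula \eqref{limvarOP} of the paper rather than a gap in your argument.
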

One recovers the CLT known for the GUE random matrices where $a=1$ and $b=0$.

In \citep{BrDu13}, all these results are extended from polynomials $P$ to $\mathscr{C}^1$ functions $f$ satisfying an appropriate  growth condition, by means of a density argument. They also obtain CLTs for more general banded polynomial ensembles, although the limiting variance has a less explicit form than \eqref{limvarOP}, which is a weighted version of  the Sobolev norm $H^{1/2}$. \\

A similar CLT for the linear statistics of higher dimensional DPPs associated with projection onto multivariate orthogonal polynomials has  been derived in \citep{BaHa16}, with a proof strongly using recurrence coefficients' asymptotics. These are DPPs generating $N$ points on $\R^d$, for any $d\geq 1$, which are the higher dimensional analogues of the OP ensembles; we introduced such processes for the purpose of building a Monte Carlo method converging faster than the classical methods, based on weakly or non correlated random variables. The main difference in higher dimension is that now the variance  grows with the number of points $N$. More precisely it is proved that, as $N\to\infty$,
\eq
\label{varR^d}
\Var\left[\,\sum_{i=1}^{N}f(x_{i})\right]\sim N^{1-1/d} \,\sigma_f^2\,,
\qe
where $\sigma_f^2$ is an explicit constant depending on the $\mathscr C^1$ test function $f:\R^d\to\R$. Due to a result of Soshnikov this automatically yields a CLT for $\sum f(x_i)$, but the variance asymptotics \eqref{varR^d} is much harder to obtain than in the one dimensional setting we describe above.  Comparing to the case of $N$ i.i.d random variables $X_i$ on $\R^d$ where 
$$
\Var\left[\,\sum_{i=1}^{N} f(X_i)\right]\sim N \,\tilde \sigma_f^2\,,
$$
these higher dimensional point processes are sometimes called \emph{hyperuniform}.

\begin{OP} Given DPPs associated with finite rank $N$ projections $K_N$ over functions acting on an arbitrary dimensional space, which  exponents $\alpha$ for  the variance's growth
\[
\Var\left[\,\sum_{i=1}^{N}f(x_{i})\right]\sim N^{\alpha} \,\sigma_f^2\,
\]
are possible for smooth test function $f$, assuming we are in the global regime? The latter means that  $\frac1NK_N(x,x)\mu_N(\d x)$ has a non-trivial weak limit when $N\to\infty$, since otherwise one can always modify the exponent $\alpha$ by scaling the particle system. It appears that $\alpha$ depends more on the space of functions than the real dimension of the ambiant space itself. For instance, a variance growth of $N^{1-1/d}$ is achived in \citep[Theorem 1.5]{Ber16} for  DPPs living on a complex manifold of dimension $d$, thus locally diffeomorphic to $\R^{2d}$.
\end{OP}

%
%
%
%
%
%
%
%

\section{Sampling a DPP with non-orthogonal projection kernel}
\label{sec:sim}
In this section, we consider the general setting where $K(x,y)$ is a  complex valued positive definite kernel associated with a rank $N$ projection acting $K$ on $L^2(\mu)$, for some appropriate reference measure $\mu$ with support $\Lambda$. Namely, we only assume the kernel is positive definite and satisfies
\eq
\label{Kcond}
\int K(x,x)\mu(\d x)=N,\qquad \int K(x,u)K(u,y)\mu(\d u)=K(x,y),\qquad x,y\in \Lambda.
\qe
Under these assumptions, it is well known that the permutation invariant measure on $\Lambda^N$
\eq
\label{dP}
\d \P(x_1,\ldots,x_N)=\frac1{N!}\det\Big[K(x_i,x_j)\Big]_{i,j=1}^N\prod_{j=1}^N\mu(\d x_j)
\qe
is a probability distribution  and that the point process $x_1,\ldots,x_N$ generated by $\P$ is determinantal with kernel $K(x,y)$. The latter assertion means that, for every $k\geq  1$ and fixed $x_1,\ldots,x_k\in\Lambda$,
\eq
\label{rhok}
\frac{1}{(N-k)!}\int_{X^{N-k}} \d\P(x_1,\ldots,x_N)=\det\Big[K(x_i,x_j)\Big]_{i,j=1}^k.
\qe
 
The goal of this section is to provide a sampling algorithm for $x_1,\ldots,x_N$ with joint probability distribution \eqref{dP}. We first review the HKPV algorithm, introduced by  \cite{HoKrPeVi06}, in a self contained way. This algorithm is based on a neat geometric interpretation but only works when the kernel $K(x,y)$ is hermitian. Next, we reinterpret this algorithm so as to get rid of this hermitian assumption.

\subsection{The HKPV algorithm}
In this subsection, we make the extra assumption that $K$ is hermitian, i.e. $K(y,x)=\overline{K(x,y)}$. The idea behind the HKPV algorithm goes as follows: Using that $K$ is hermitian and the reproducing property \eqref{Kcond}, we obtain that
$$
K(x_i,x_j)=\int K(x_i, y)\overline{K(x_j,y)}\mu(\d y)=\langle \psi_i,\psi_j\rangle,\qquad \psi_j(x):=K(x_j,x),
$$
where $\langle\cdot,\cdot\rangle:=\langle\cdot,\cdot\rangle_{L^2(\mu)}$.
Hence,
\eq
\label{det=Gram}
\det\Big[K(x_i,x_j)\Big]_{i,j=1}^N=\det\Big[\langle \psi_i,\psi_j\rangle\Big]_{i,j=1}^N
\qe
is a Gram matrix which represents the squared volume of the parallelotope generated by the vectors $\psi_1,\ldots,\psi_N$. Note that by \eqref{dP}, \eqref{det=Gram} the  $\psi_k$'s are linearly independent for $\P$-almost every configuration $x_1,\ldots,x_N$.  Thus, using the generalized ``base  times height'' formula we can write this determinant as a product of $N$ functions. It turns out the $k$-th function only involves $x_1,\ldots,x_k$ and is a probability density with respect to $\mu$ in the variable $x_k$. This allows us to sample  $x_1,\ldots,x_N$ from $\P$ inductively by sampling $x_1$ according the the first density, and then $x_2$ according to the second density knowing $x_1$, etc. \\

More precisely, set $H_N:=\Span(\psi_1,\ldots,\psi_N)$ and orthogonalize  the family $\psi_k$ by setting $$\hat\psi_1:=\psi_1,\qquad \hat\psi_{k+1}:= P_{H_{N-k}}(\psi_{k+1}),$$ where $P_H$ stands for the orthogonal projection onto $H$ and $H_{N-k}$ is the orthocomplement of $\Span(\psi_{1},\ldots,\psi_k)$ in $H_N$. That is, we apply the Gram--Schmidt algorithm to the $\psi_k$'s except that we do not normalize at each step the resulting orthogonal family. The interest in doing so is the generalized ``base  times height'' formula:
\eq
\label{basexheight}
\det\Big[\langle \psi_i,\psi_j\rangle\Big]_{i,j=1}^N=\prod_{k=1}^N\|\hat\psi_k\|^2.
\qe
\begin{proof} If $\mathcal R$ is the endomorphism of $H_N$ defined by $\mathcal R\hat\psi_k=\psi_k$ for every $1\leq k\leq N$, then its matrix representation in the basis $(\hat\psi_k)$ reads  
$$
R=\left[\frac{\langle \psi_i,\hat\psi_j\rangle}{\|\hat\psi_j\|^2}\right]_{i,j=1}^N.
$$
Since  $\psi_k=\hat\psi_k+\phi$ with $\phi\in\Span(\hat\psi_1,\ldots,\hat\psi_{k-1})$ by construction,  $R$ is lower triangular  and moreover $R_{jj}=1$ for every $j$. Thus, $\det R=1$ and \eqref{basexheight} follows since,  using that $\hat\psi_k$ is an orthogonal family, we have
$$
\det\Big[\langle \psi_i,\psi_j\rangle\Big] =\det\Big[\langle \mathcal R\hat\psi_i,\mathcal R\hat\psi_j\rangle\Big]=\det\Big(R\Big[\langle \hat\psi_i,\hat\psi_j\rangle\Big]R^*\Big)=|\det R|^2\prod_{k=1}^N\|\hat\psi_k\|^2.
$$
\end{proof}

By combining \eqref{dP},\eqref{det=Gram} and \eqref{basexheight}, we thus obtain the identity 
\eq
\label{factorization}
\d \P(x_1,\ldots,x_N)=\frac1{N!}\prod_{k=1}^N\|\hat\psi_k\|^2\mu(\d x_k).
\qe
For the first term, we have the explicit formula
\eq
\label{eta1}
\eta_1(\d x_1):=\frac1N \|\hat\psi_1\|^2\mu(\d x_1)= \frac1N\|\psi_1\|^2\mu(\d x_1)=\frac1NK(x_1,x_1)\mu(\d x_1),
 \qe
 and we see that $\eta_1$ is a probability measure from \eqref{Kcond}.
 Then, for any $k\geq 1$,  
 $$
 \eta_{k+1}(\d x_{k+1}| x_1,\ldots, x_{k}):=\frac1{N-k}\|\hat\psi_{k+1}\|^2\mu(\d x_{k+1})
 $$ is non-negative and only depends on $x_1,\ldots,x_{k+1}$ by construction, and we have from \eqref{factorization},
\eq
\label{chain rule 1}
\d \P(x_1,\ldots,x_N)=\eta_{1}(\d x_{1})\prod_{k=2}^N\eta_k(\d x_k|x_1,\ldots,x_{k-1}).
\qe
  Moreover, $ \eta_{k+1}(\,\cdot\,| x_1,\ldots, x_{k})$  it is a probability measure on $\Lambda$. 
 
 \begin{proof}Given any subspace $H\subset H_N$, if we let $K_H$ be the orthogonal projection onto $H$ then clearly $K_HK=K_H$. If $K_H(x,y)$ stands for the kernel of $K_H$, the latter identity yields that $K_{H}\psi_{k}=K_H(x_{k},\cdot)$. As a consequence,
\begin{align*}
 \int \eta_{k+1}(\d x_{k+1}| x_1,\ldots, x_{k})&=\frac1{N-k}\int \|K_{H_{N-k}}\psi_{k+1}\|^2\mu(\d x_{k+1})\\
& =\frac1{N-k}\int \|K_{H_{N-k}}(x_{k+1},\cdot)\|^2\mu(\d x_{k+1})\\
&=\frac1{N-k}\int K_{H_{N-k}}(x_{k+1},x_{k+1})\mu(\d x_{k+1})\\
&=\frac1{N-k}\mathrm{Tr}(K_{N-k})=1.
\end{align*}
\end{proof}
In conclusion, we see from \eqref{chain rule 1} that sampling $x_1,\ldots,x_N$ with joint distribution $\P$ amounts to sample  sample $x_1$ with distribution $\eta_1$, then $x_2$ with distribution $\eta_2(\,\cdot\,|x_1)$, then  $x_3$ with distribution $\eta_3(\,\cdot\,|x_1,x_2)$, etc. This is the HKPV algorithm introduced in \citep{HoKrPeVi06}.

For this algorithm to be implementable in practice, one needs to be able to sample according to the distributions $\eta_k$'s. For $\eta_1$ we have the explicit  formula \eqref{eta1} and, assuming one knows $\mu$ and an explicit upper bound on $K(x,x)$, one can sample from this law by rejection sampling. As for the other densities, we have by orthogonality
\eq
\label{etak HKPV}
\frac{ \eta_{k+1}(\d x\,| x_1,\ldots,x_{k})}{\mu(\d x)}=\frac1{N-k}\|\hat\psi_k\|^2  =\frac1{N-k}\left(K(x,x)-\sum_{j=1}^{k-1}\frac{\langle\psi_k,\hat\psi_j\rangle^2}{\|\hat\psi_j\|^2}\right),
\qe
which allows pointwise evaluations inductively. See \cite[Section 4]{ScZaTo09} and \cite[Section 2.4]{LaMoRu15} for further details on how to implement the HKPV algorithm in practice.

\subsection{The HKPV algorithm revisited}
The previous geometric interpretation, starting point of the HKPV algorithm, becomes unclear when $K(x,y)$ is not hermitian. One may notice the previous algorithm amounts to perform a Cholesky decomposition  $[K(x_i,x_j)]_{i,j=1}^N=LDL^*$ and to store the entries of the diagonal matrix $D$. Performing a $LDU$ decomposition would extend the algorithm to the non-hermitian setting. This works, but we suggest the following simple approach instead.

First, \eqref{rhok} yields that for $\P$-almost every configuration $x_1,\ldots,x_N$ and every $1\leq k\leq N$, 
\eq
\label{posit} 
\det\Big[K(x_i,x_j)\Big]_{i,j=1}^{k}>0.
\qe
Consider the mean distribution, 
\eq
\eta_1(\d x_1):=\frac1NK(x_1,x_1)\mu(\d x_1),
\qe
 as well as the marginal distributions given, for $1\leq k \leq N-1$ and $x_1,\ldots,x_k$ satisfying \eqref{posit}, by
\eq
\label{etak}
\eta_{k+1}(\d x_{k+1}|x_1,\ldots,x_k):=\frac1{N-k}\frac{\det\big[K(x_i,x_j)\big]_{i,j=1}^{k+1}}{\det\big[K(x_i,x_j)\big]_{i,j=1}^{k}}\, \mu(\d x_{k+1}).
\qe
We define the density of $\eta_{k+1}(\d x_{k+1}|x_1,\ldots,x_k)$ to be zero otherwise. Clearly,
\eq
\label{chain rule 2}
\d \P(x_1,\ldots,x_N)=\eta_{1}(\d x_{1})\prod_{k=2}^N\eta_k(\d x_k|x_1,\ldots,x_{k-1}).
\qe
It follows from \eqref{Kcond} that $\eta_1$ is a probability measure and, because \eqref{rhok} yields 
$$
\int \det\Big[K(x_i,x_j)\Big]_{i,j=1}^{k+1}\mu(\d x_{k+1})=(N-k)\det\Big[K(x_i,x_j)\Big]_{i,j=1}^{k},
$$
so does $\eta_{k+1}(\,\cdot\, | x_1,\ldots,x_k)$ for every $1\leq k\leq N-1$ and $x_1,\ldots,x_k$ such that \eqref{posit} holds true.\\

Thus the same conclusion than in the HKPV algorithm applies: sampling $x_1,\ldots,x_N$ with joint distribution $\P$ amounts to sample  sample $x_1$ with distribution $\eta_1$, then $x_2$ with distribution $\eta_2(\,\cdot\,|x_1)$, etc. 
As before $\eta_1$ is explicit and, using a Schur complement in \eqref{etak}, we have for any $k\geq 2$ the explicit formulas for the densities:
\begin{multline}
\label{formule cool}
\frac{ \eta_{k+1}(\d x\,| x_1,\ldots,x_{k})}{\mu(\d x)}\\
=\frac1{N-k}\left(K(x,x)-\left[\begin{matrix} K(x,x_1)\\
\vdots\\
K(x,x_{k})\end{matrix}\right]^T\left(\Big[K(x_i,x_j)\Big]_{i,j=1}^{k}\right)^{-1}\left[\begin{matrix} K(x_1,x)\\
\vdots\\
K(x_{k},x)\end{matrix}\right]\right).
\end{multline}
Note that, by successive integrations, a decomposition of the form \eqref{chain rule 2} into probability measures  has to be unique, and hence both algorithms yield the same $\eta_k$'s when $K(x,y)$ is hermitian. Actually, one can work out the formula \eqref{etak HKPV} so as to obtain \eqref{formule cool}, as pointed out in \cite[Section  2.4]{BaHa16}. In conclusion, this algorithm is just a reinterpretation of the HKPV algorithm when $K(x,y)$ is hermitian, but which extends its applicability beyond the hermitian setting.

\begin{OP} As it is explained in \citep{HoKrPeVi06}, there are honest DPPs for which $K(x,y)$ is not a projection kernel, but a finite rank contraction, and for which it is still possible to  sample a point configuration. This however requires the explicit knowledge of the kernel's spectral decomposition, namely to have at disposal $\lambda_k$'s and two biorthogonal families $\phi_k,\psi_k$ such that, in $L^2(\mu)$,
$$
K(x,y)=\sum_{k=1}^{N}\lambda_k\phi_k(x)\psi_k(y).
$$
The idea is that if one samples for each $k\in\{1,\ldots, N\}$ a  Bernoulli random variables $X_k$ of parameter $\lambda_k\in(0,1]$  (recall  $K$ is a contraction), namely $\P(X_k=1)=1-\P(X_k=0)=\lambda_k$, and then sample a DPP from the finite rank projection kernel,
$$
K_I(x,y):=\sum_{k\in I}\phi_k(x)\psi_k(y),\qquad I:=\big\{k\in\{1,\ldots,N\} :\; X_k=1\big\},
$$
then one can check that the points obtained this way have the same distribution as the DPP with the original contraction kernel $K$. Could we find a procedure to sample  a contraction DPP without the a priori knowledge of its spectral decomposition? Note that one can always see a contraction as the restriction of a projection provided one enlarges the ambiant space; for instance, for finite state spaces, this amounts to couples the particles of the DPP with the holes where the particles are not. Can this be used to build an algorithm solely based on the data of the kernel $K(x,y)$?  

This question is  related to a natural but, to my knowledge, unsolved problem: Can we sample \emph{exactly} from the celebrated Sine kernel?  This is a projection kernel which is not of finite rank, hence generating a.s. an infinite number of points on $\R$. But if you fix a compact window, say $W:=[-10,10]$, and condition on the fact that the process generates, say, at most  $100$ points in that window (which is quite likely since this is a point process of intensity one), then one can ask about sampling such a restricted process in $W$. This leads to a contraction operator on $L^2(W)$ for which the spectral decomposition does not seem to be accessible. Same question for the Airy, Bessel, or Pearcey kernel. For instance, to my knowledge, there is no \emph{exact} algorithm to sample from the Tracy--Widom distribution yet.

\end{OP}

\bibliographystyle{plainnat} 
\bibliography{Recuref}

\end{document}